%% file: ex_article.tex
\documentclass{siamart250211}

\input{ex_shared}

\ifpdf
\hypersetup{
  pdftitle={Temperature induced tipping in a two-box ocean circulation model},
  pdfauthor={J. Noory}
}
\fi

\usepackage{amsfonts, amsmath,amssymb, comment}
\usepackage{subcaption}

\begin{document}

\maketitle

\begin{abstract}
 The Atlantic Meridional Overturning Circulation (AMOC) is often analyzed using low-order box models to understand tipping points. Historically, these studies focus on freshwater flux as the primary bifurcation parameter, treating the temperature gradient as a fixed restoring target. However, the erosion of the equator-to-pole temperature contrast due to polar amplification suggests that thermal forcing should be treated as a dynamic control parameter. In this study, we use Cessi’s reduced box model to map the global bifurcation structure of the thermohaline circulation. We relax the assumption of a fixed thermal background and analyze the system’s behavior under joint thermal and haline forcing. We prove the existence of a cusp bifurcation, identifying the specific geometry of pitchfork and saddle-node bifurcations that bound the stable regime. This geometric characterization reveals that thermal erosion acts as a distinct mechanism for destabilization, capable of driving the system across critical thresholds even in the absence of anomalous freshwater forcing.
\end{abstract}


\begin{keywords}
Critical threshold, bifurcation, low dimension ordinary differential equation
\end{keywords}

\begin{MSCcodes}
37G10, 37N10, 86A08
\end{MSCcodes}

\section{Introduction}
    The Atlantic Meridional Overturning Circulation (AMOC) is a part of the global ocean current, a mechanism by which the ocean circulates heat and regulates regional climate patterns. Many processes influence the AMOC such as mixing and wind currents. An idealized model of the AMOC functions by sending warm, salty water from the tropics northward toward the Arctic. As this warmer water reaches colder regions, it cools and becomes more saline, both contributing to the water becoming denser, making it sink into the deep ocean. The transformed water then flows southward at depth, eventually warming as it moves toward the equator. As it warms again, its density decreases, and it rises back to the surface, completing a large-scale thermohaline circulation loop. This process is slow, taking roughly 1,000 years for a parcel of water to complete the full cycle \cite{NOAA}. 

    It's posited that freshwater injections into the North Atlantic ocean, disrupting the AMOC pattern, are a driver of abrupt climate transitions. This is evidenced in the paleoclimate record, seen in ice core data collected from Greenland, which reveals fluctuations between warming and cooling ocean temperatures in the northern hemisphere occurring in the last glacial period. The climatic fluctuations are referred to as Dansgaard-Oescher (D-O) events, characterized by rapid warming followed by gradual cooling. These events  suggest that Earth's climate oscillated between  quasi-stable climate regimes.

    Researchers use box models to study the AMOC. In the 1960s, oceanographer Henri Stommel introduced a box model to demonstrate how thermohaline circulation arises from density differences between the northern and southern oceans. Influenced by external freshwater forcing and governed by relaxation processes, the model shows how density differences, defined by competing forcing of temperature and salinity, drive the speed of the circulation. Often used in study in which the circulation is subjected primarily to freshwater forcing, this model exhibits bistability for a particular range of parameters. The bifurcation diagram exhibits two stable branches, one dominated by a temperature mode and the other dominated by salinity.  Moreover, its bifurcation diagram demonstrates how freshwater forcing affects the overturning circulation, with an increase in freshwater forcing tending towards a globally stable regime. 

    Two-box models are often used as a basis for methods experimentation, particularly in paleoclimate studies. The Younger Dryas (YD) transition event is the last D-O event of the last glacial period. Motivated to back the hypothesis that the YD transition was triggered by an increase in freshwater flux by melting ice dams, causing a deluge from ancient Lake Agassiz, oceanographer Paola Cessi \cite{Cessi1994} used Stommel's model. She contributed a reformulation of salinity dynamics and assumed a large enough timescale difference between temperature and salininty as a treatment to reduce Stommel's model to one dimension. The fast-slow system induced by the reformulation is dominated by the salinity dynamics, thus the temperature gradient is often assumed to be fixed at the relaxation target of the equilibrium the temperature dynamics in the relaxation process. She derived critical thresholds for amplitude and duration of freshwater influx, demonstrating that a sufficient amplitude of influx could cause a dynamic bifurcation of the system for long enough such that a flow, starting in the basin of attraction of one stable state, could overcome the unstable equilibrium and be influenced by the dynamics into the alternate stable state's basin.  

    Although salinity is traditionally identified as the dominant variable governing hysteresis in the thermohaline circulation, the stabilizing role of temperature warrants critical re-examination. Historically, the paradigm of AMOC tipping has centered on freshwater forcing as the control parameter \cite{Stommel1961, Cessi1994}. However, recent refinements suggest the system possesses greater salinity-driven stability than initially assumed (Lambert et al. 2016). Conversely, thermal forcing plays a dominant role in determining inflow sensitivity and overall circulation strength (Schloesser et al. 2020). Therefore, there is a pressing need to re-evaluate the system's dependence on the meridional temperature gradient. As polar amplification continues to erode the equator-to-pole thermal contrast, treating the temperature gradient as a primary bifurcation parameter allows us to capture a tipping mechanism that is more directly aligned with observed climate trends than traditional freshwater hosing experiments.

The motivation to elevate the temperature gradient to a primary bifurcation parameter is underscored by both paleoclimate analogues and modern observation. During intervals of "equable climate" such as the early Pliocene, the meridional temperature gradient was significantly weaker than today, yet the overturning circulation persisted, potentially in a structurally distinct stability regime \cite{Fedorov}. This weakened gradient is analogous to the modern phenomenon of polar amplification, where Arctic warming outpaces the global average, effectively eroding the equator-to-pole density contrast that drives the thermal component of the AMOC \cite{Serreze}. As this thermal driving force diminishes, the system may approach a "thermal threshold" distinct from the traditional freshwater-induced saddle-node bifurcation. Consequently, investigating the global bifurcation structure with respect to the thermal gradient allows us to map the stability landscape relevant to a warming—rather than merely freshening—world.

 In the context of low-order modeling, \cite{Rahmstorf1996} examined the impact of these temperature feedbacks using an idealized model with Newtonian restoring. In this framework, sea surface temperature is subject to linear restoring toward a prescribed atmospheric state, while salinity evolves prognostically under surface freshwater forcing. By varying the freshwater flux, \cite{Rahmstorf1996} mapped the equilibrium structure and stability of the circulation. Crucially, this establishes a separation of timescales: temperature acts as a fast, linearly damped variable providing negative (stabilizing) feedback on circulation anomalies, while salinity supplies the slow, nonlinear positive feedback responsible for multiple equilibria and abrupt transitions.

    This paper contributes a geometric characterization of the role of temperature in shaping the stability and tipping behavior of the thermohaline circulation within Cessi’s box model framework. While salinity is well established as the dominant driver of multiple equilibria and abrupt transitions, temperature is typically treated either as a fast, slaved variable or as a fixed boundary condition. Here, we relax this assumption and examine how variations in temperature boundary conditions modify the bifurcation structure of the system. Using a cusp bifurcation framework, we jointly vary temperature and salinity forcing to identify the geometry of the pitchfork bifurcations that bound the bistable regime. We show that temperature exerts a stabilizing influence by reshaping the folds of the bifurcation surface, thereby altering the range of salinity perturbations capable of inducing tipping. This perspective reframes temperature not merely as a passive, stabilizing feedback, but as a control parameter that modulates the system’s sensitivity to freshwater forcing.

\vspace{1cm}
\section{Background}
    We start with an explanation of Stommel's model of thermohaline circulation as presented in Paola Cessi's 1994 paper \cite{Cessi1994}. The reduced model will be referred to as Cessi's model in the proceeding sections.
    
    \subsection*{The Model}
    We first describe the governing equations and exchange law, then introduce a nondimensional form that facilitates bifurcation and stability analysis. Assume there are two well-mixed liquid reservoirs with the following external forcing: a freshwater influx into the compartments as well as an ambient temperature relaxation target, which may differ for each box. Assume that density is described by a linear combination of temperature and salinity with characteristic contraction and expansion rates. Finally, assume that both reservoirs interact through an exchange that is driven by the density difference between boxes. This is Stommel's box model \eqref{Stommel Model}, and it captures thermohaline circulation. 
    
    In what follows, we adopt the nonlinear exchange formulation introduced by Cessi, which smooths Stommel’s original flow law while preserving its essential bistable structure. The model is described as follows
    
     \begin{align}\label{Stommel Model}
        \frac{d\Delta T}{dt} &= -t_r^{-1}(\Delta T - \theta ) - Q(\Delta\rho) \Delta T  \notag \\ 
        \frac{d\Delta S}{dt} &= \frac{F(t)}{H}S_0 - Q(\Delta\rho) \Delta S  \\ 
        \Delta \rho &= \alpha_S \Delta S - \alpha_T \Delta T. \notag
    \end{align}

    The model expresses thermohaline circulation between two compartments representing a two-box ocean for northern and southern latitudes with a system of ordinary differential equations. The system tracks the time evolution of temperature differences $\Delta T$ and salinity difference $\Delta S$ between the boxes. Temperature and salinity differences are forced by different processes. The temperature forcing process in this model is Newton's law of cooling, represented by a relaxation of $\Delta T$ towards the temperature difference equilibrium, $\theta$. The rate at which the boxes equilibrate to $\theta$ is $t_r^{-1}$, the characteristic relaxation timescale. The salinity difference is forced with a prescribed freshwater flux $F(t)$, representing the imbalances between evaporation and precipitation plus freshwater runoff from rivers and glaciers into the North Atlantic. $F(t)$ acts indiscriminately on the whole box of depth $H$ with reference salinity state $S_0$.  
    
    The interaction between the boxes is described by exchange function $Q$, which is driven by the density difference between the two, $\Delta \rho$. The density difference, $\Delta\rho$, depends on a linear combination of temperature and salinity differences through thermal expansion coefficient $\alpha_T$ and haline contraction coefficient $\alpha_S$. In Stommel's model, the exchange is described as an absolute value term, representing the magnitude of the volume transport (or flow rate) between the equatorial and polar boxes, dictating the speed at which heat and salt are mixed regardless of the circulation's direction. In contrast, Cessi's model used a squared flow rate parameter to describe the strength of the exchange, justified by a quadratic scaling of volume transport as an effect of the density gradient scaling with the current velocity and width of the basin. This formulation of the flow rate means that the model is smooth and differentiable everywhere, allowing her to analytically derive the potential landscape in which the ocean state resides.
    
    The exchange function $Q$ is of the following form, where $M$ is the volume of the box, $t_d$ is the characteristic timescale of salinity diffusion, and $q$ represents a proportionality constant of the density difference: 
    \begin{align}
        Q(\Delta \rho) = t_d^{-1} + M^{-1} q(\Delta \rho)^2. \label{eqn: exchange function}
    \end{align}

    The following change of variables is used to nondimensionalize the system. The new variable $x$ tracks a dimensionless temperature difference, and the variable $y$ tracks the dimensionless salinity difference. Time $t$ represents a scaled time expanded to the characteristic diffusion timescale. The new variables are
    \[ x = \frac{\Delta T}{\theta}, \ \ \ y = \frac{\alpha_S \Delta S}{\alpha_T \theta}, \ \ \ t = t_d t'.\]
    
    \noindent Then equations \eqref{Stommel Model} become
    \begin{align}
        x' &= -\alpha(x-1) - x(1+\mu^2(x-y)^2) \label{nondimensional Cessi model}\\
        y' &= p(t) -y(1+\mu^2(x-y)^2). \notag
    \end{align}
    
   \noindent Here $\alpha=\frac{t_d}{t_r}$, describes the ratio of characteristic timescales of diffusion to relaxation. The parameter $\mu^2 = qt_d(\alpha_T \theta)^2/M$ describes the ratio of diffusive timescale to advective timescale. In this parameter expression, the transport of total volume $M$ represents the typical width of the Western boundary current rather than the entire width of the North Atlantic so to keep consistent with the assumption that present day transport is dominated by the advective process. Lastly, $p$ is the nondimensional freshwater flux with $p(t) = \frac{\alpha_S S_0 t_d}{\alpha_T \theta H} F(t)$.  The freshwater flux is approximated by a constant average yearly freshwater flux with an added pulse of influx. With an average estimate for the North Atlantic fresh water flux $\bar F = 2.3$ m yr$^{-1}$ and $\theta = 20^{\circ}$C, then $p\approx 1$. 

   Using Cessi’s parameter estimates of characteristic timescales, $t_d \approx 219$ years and $t_r \approx 25$ days, the ratio $\alpha \sim \mathcal{O}(10^3)$, indicating a strong separation of timescales between temperature and salinity evolution. This motivates Cessi's contribution to introduce a reduced one-dimensional system, where the salinity timescales are assumed, thus temperature is assumed to relax instantaneously to its critical manifold $x=1$. As a starting point, we will consider the freshwater forcing $p$ to be constant. The one-dimensional model is 
   
   \begin{equation}\label{eqn: Cessi orig model}
       y' = p - y(1+\mu^2(1-y)^2). 
   \end{equation}

   This one-dimensional system admits a potential function. The salinity dynamics on the critical manifold are governed by the equation $\frac{dy}{dt'} = -V'(y)$, where
    \begin{equation}\label{eqn: potential function}
        V(y) = \frac{y^2}{2} + \mu^2\left(\frac{y^2}{2}-\frac{2}{3}y^3 +\frac{y^4}{4}\right)+ yp.
    \end{equation}
    The stable equilibria of the salinity dynamics correspond to the local minima of the Lyapunov potential function $V(y)$. Formally, linear stability is determined by the curvature of the potential, requiring $V''(y) > 0$. Physically, this condition implies that the restoring force of the thermal feedback dominates the destabilizing advection of salinity. The depth of the wells in the potential function corresponds to the resilience of each equilibrium. Figure \ref{fig: 1D_Landscape_Pvary} demonstrates the potential energy landscape of the salinity dynamics for two freshwater forcing values: an average freshwater forcing selected by Cessi (left) and a tipping value identified by Cessi (right). The left panel exhibits two competing stable configurations, with transitions, as $p$ varies, only occurring when the barrier (the repelling state) is overcome. This means that with enough freshwater forcing, a saddle-node bifurcation occurs, creating the opportunity for the flow to enter the basin of attraction of the alternate stable state. This saddle-node bifurcation is well studied. 

    \begin{figure}[H]
        \centering
        \includegraphics[width=0.9\linewidth]{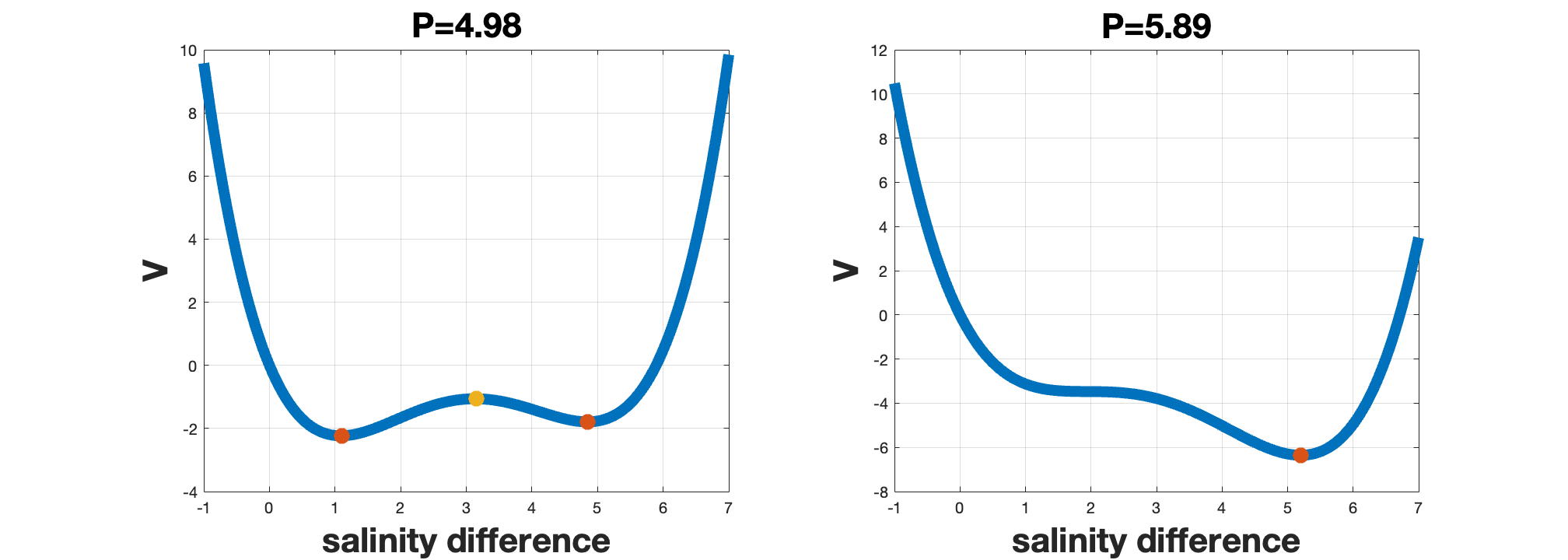}
        \caption{Potential energy landscapes for bistable ($P=4.98$) and monostable ($P=5.89$) freshwater forcing regimes, illustrating the potential barrier separating the competing stable states.}
        \label{fig: 1D_Landscape_Pvary}
    \end{figure}

    Stable configurations of circulation

    \begin{figure}[H]
        \centering
        \includegraphics[width=0.8\linewidth]{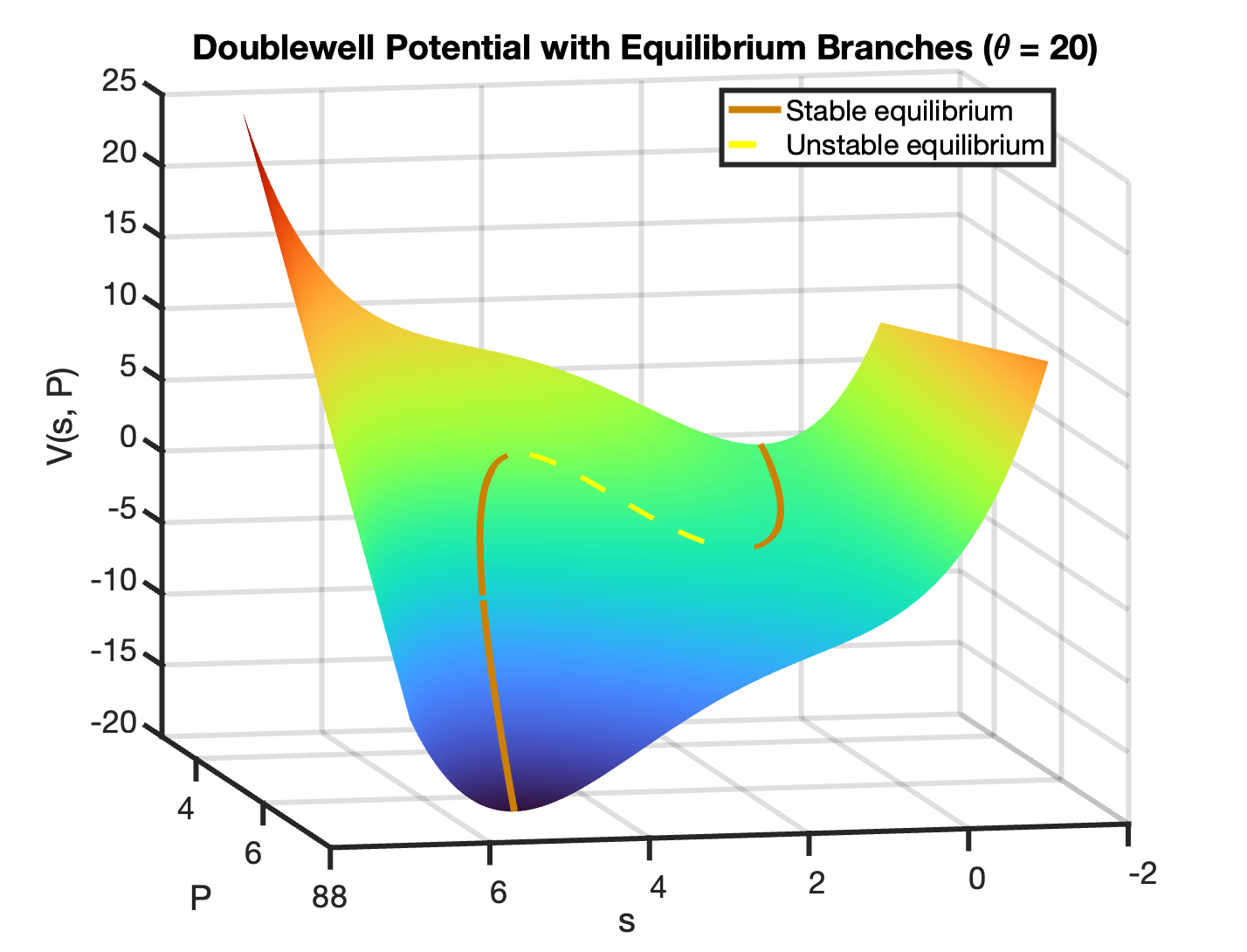}
        \caption{Bifurcation diagram showing the stable (solid) and unstable (dashed) equilibrium branches of the circulation as a function of freshwater forcing $P$. The upper and lower stable branches correspond to salinity-dominated and thermally-dominated modes, respectively.}
        \label{fig:LandscapeSurface_varyP}
    \end{figure}

    Examining the potential landscape for an extended range of freshwater forcing $P$ reveals that a bistable regime is supported only for a finite range of $P$ values, as depicted in 
    Figure \ref{fig:LandscapeSurface_varyP}. The stable branches of the bifurcation diagram represent the two circulation modes, one at lower $P$ values corresponding to a temperature-dominated circulation, and the other at high $P$ values dominated by salinity. The latter of these is associated with reduced effective exchange between the boxes.

\section{Two-parameter reformulation of Cessi's Model}
We now reformulate Cessi's model to incorporate target temperature difference equilibrium as a parameter. The reformulation permits an analysis of the dynamics of thermohaline circulation subject to changing temperatures in northern and southern oceans. Using this reformulation, we analyze the role of changing ocean temperatures on the dynamics of thermohaline circulation.
To state our reformulation, we pull back to the unnormalized temperature difference and salinity difference using the the relations \eqref{eqn: cessi change of variables} and follow Cessi's timescale-reduction technique.

Redimensionalizing \eqref{nondimensional Cessi model} using the relations 
\begin{align}
    \theta\frac{dx}{dt'} = \frac{d \Delta T}{dt'}  \hspace{1cm} \frac{\alpha_T \theta}{\alpha_S}\frac{dy}{dt'} = \frac{d\Delta S}{dt'} \label{eqn: cessi change of variables}
\end{align}
leads to the system
\begin{align}
     \frac{d\Delta T}{dt'} &= -\alpha(\Delta T - \theta) - \Delta T(1+\beta(\Delta T- \lambda \Delta S)^2 ) \label{eqn:temperature difference}\\
     \frac{d\Delta S}{dt'} &=P(t) - \Delta S(1+\beta(\Delta T -\lambda \Delta S)^2 )\label{eqn:salinity difference}.
\end{align}
Here, $\lambda = \frac{\alpha_S}{\alpha_T}$, $\beta = \frac{qt_d \alpha_T^2}{V}$, and $P(t) = \frac{S_0t_d}{H}F(t)$. Notably, we've recovered the temperature difference equilibrium $\theta$, which represents the difference of ambient temperatures surrounding the northern and southern boxes to which temperatures relax. The time variable $t'$ denotes physical time scaled by the diffusive timescale. The derivative $\frac{d \Delta T}{dt'}$ describes temperature difference change in celcius and $\frac{d \Delta S}{dt'} $ describes salinity difference change in practical salinity unit (psu).

Our reformulation preserves the fast-slow properties of the dynamics. The system may still be reduced to a one-dimensional equation since $\alpha$ is left unchanged from \eqref{nondimensional Cessi model}. Because the temperature difference $\Delta T$ evolves at a faster timescale relative to salinity difference $\Delta S$, we can further simplify the analysis of the system by projecting the salinity dynamics onto the critical manifold of the temperature difference, where the fast variable is at quasi-equilibrium $\Delta T = \theta$. This leads to the following lemma. 

\begin{lemma}[Model Reformulation] \label{lemma: new stommel formulation}
    For a large enough timescale distinction, a one-dimensional model of thermohaline circulation, \eqref{eqn: Cessi orig model}, may be restated as
    \begin{equation}
        \frac{d\Delta S}{dt'} =P - \Delta S(1+\beta(\theta -\lambda \Delta S)^2 ) \label{eqn: theorem unnormalized salinity dyanmics}
    \end{equation}
    with the parameters $\lambda = \frac{\alpha_S}{\alpha_T}$, $\beta = \frac{qt_d \alpha_T^2}{V}$, and $P = \frac{S_0t_d}{H}\bar F$, where $\bar F$ is an annual average freshwater flux, defined by Cessi.
\end{lemma}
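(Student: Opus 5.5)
The argument has two parts. First I redimensionalize the two-dimensional nondimensional system \eqref{nondimensional Cessi model}. Then I apply Cessi's fast--slow reduction in the dimensional variables rather than in $(x,y)$.

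\emph{Redimensionalization.} Start from \eqref{nondimensional Cessi model} and substitute $x=\Delta T/\theta$ and $y=\alpha_S\Delta S/(\alpha_T\theta)$. By the chain-rule relations \eqref{eqn: cessi change of variables}, the first equation multiplied by $\theta$ and the second multiplied by $\alpha_T\theta/\alpha_S$ give $d\Delta T/dt'$ and $d\Delta S/dt'$.

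The key identity is
\[
\mu^2(x-y)^2=\frac{qt_d(\alpha_T\theta)^2}{M}\cdot\frac{(\Delta T-\lambda\Delta S)^2}{\theta^2}=\beta(\Delta T-\lambda\Delta S)^2,
\]
with $\lambda=\alpha_S/\alpha_T$ and $\beta=qt_d\alpha_T^2/M$. Here I identify the volume $V$ in the statement with $M$. The factor $\theta^2$ cancels, and this cancellation is exactly what lets $\theta$ reappear as a free parameter.

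For the salinity equation, $p\,\alpha_T\theta/\alpha_S=S_0t_d\bar F/H=P$. This recovers \eqref{eqn:salinity difference} with constant forcing $P$, and in the same way it recovers \eqref{eqn:temperature difference}.

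\emph{Reduction.} Equation \eqref{eqn:temperature difference} has the form $\dot{\Delta T}=-\alpha(\Delta T-\theta)+\mathcal O(1)$ with $\alpha=t_d/t_r\sim 10^3$. Set $\epsilon=\alpha^{-1}$ and rescale time, $\tau=\alpha t'$. The system becomes a standard singularly perturbed system: $\Delta T$ is fast and $\Delta S$ is slow.

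In the limit $\epsilon\to0$ the critical manifold is $\Delta T=\theta$. This manifold is normally hyperbolic and attracting, since the linearization in the fast direction is $-\alpha+\mathcal O(1)<0$.

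By Fenichel/Tikhonov theory there is a slow manifold $\Delta T=\theta+\mathcal O(\alpha^{-1})$, uniformly for $\Delta S$ in compact sets. The flow on it is, to leading order, \eqref{eqn:salinity difference} with $\Delta T=\theta$ substituted, which is \eqref{eqn: theorem unnormalized salinity dyanmics}.

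Finally, check consistency with \eqref{eqn: Cessi orig model}. Apply the inverse scaling $y=\Delta S/(\theta/\lambda)$ to \eqref{eqn: theorem unnormalized salinity dyanmics}, use $x=1$, and multiply by $\lambda/\theta$. This returns $y'=p-y(1+\mu^2(1-y)^2)$, so \eqref{eqn: theorem unnormalized salinity dyanmics} is a restatement of Cessi's model.

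\emph{Main obstacle.} The algebra is routine. The delicate point is making ``large enough timescale distinction'' precise. In the fast equation the $\mathcal O(1)$ term $-\Delta T(1+\beta(\Delta T-\lambda\Delta S)^2)$ is not uniformly small relative to $\alpha$ when $\Delta S$ is large.

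I would therefore restrict to a compact range $|\Delta S|\le K$ of physically relevant salinity differences. On that range I would require $\alpha\gg 1+\beta(|\theta|+\lambda K)^2$, which guarantees uniform attraction and an $\mathcal O(\alpha^{-1})$ error. Within this regime the lemma is to be read as the leading-order reduced equation.
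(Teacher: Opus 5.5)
Your proposal is correct and follows the same route as the paper. You redimensionalize \eqref{nondimensional Cessi model} through the chain-rule relations, where the $\theta^2$ cancellation gives $\mu^2(x-y)^2=\beta(\Delta T-\lambda\Delta S)^2$ and $p\,\theta/\lambda=P$, and then restrict to the critical manifold $\Delta T=\theta$ using the large ratio $\alpha$. Your Fenichel/Tikhonov justification on a compact salinity range and your rescaling check against \eqref{eqn: Cessi orig model} make precise what the paper only asserts heuristically (that $\Delta T\approx\theta$), but they do not change the argument.
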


In this context, the parameter $\lambda$ describes the ratio of haline contraction to thermal expansion. The density difference term $(\Delta T - \lambda\Delta S)^2$, now in temperature units, still drives the dimensionless exchange $(1 + \beta(\Delta T - \lambda\Delta S)^2 )$. In effect, we see this exchange is drive by the magnitude of distance of the salinity difference, scaled by $\lambda$, from reference $\Delta T$.  Thus we have a one-dimensional model of the salinity dynamics where the temperature difference equilibrium $\theta$ is a parameter!

A standard bifurcation analysis of the reduced salinity dynamics in \eqref{eqn: theorem unnormalized salinity dyanmics} reveals two things. First, we see two equilibria which retain the the same mode characterizations as Stommel's model, namely that there is a fast thermally-driven mode and a slow, haline-driven mode. When the difference is high,  that variations in the equilibrium temperature difference $\theta$ induce a saddlenode bifurcation analogous to that obtained by varying the freshwater forcing parameter $P$. Thus, in addition to freshwater forcing, the meridional temperature gradient itself acts as a control parameter capable of reorganizing the stability structure of the thermohaline circulation.

\begin{theorem}[Temperature Induced Hysteresis]\label{thm: temperaturesaddle}
Variations of the equilibrium temperature difference $\theta$ in Cessi’s model reforumlation, given in \cref{lemma: new stommel formulation}, induce hysteresis.
\end{theorem}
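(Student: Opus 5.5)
We work with the reduced salinity equation \eqref{eqn: theorem unnormalized salinity dyanmics} from \cref{lemma: new stommel formulation}. We fix $P>0$, $\beta>0$, $\lambda>0$ and treat $\theta$ as the bifurcation parameter. Write $f(s;\theta)=P-s\bigl(1+\beta(\theta-\lambda s)^2\bigr)$. We exhibit hysteresis by showing two things. First, there is an interval $(\theta_1,\theta_2)$ on which $f(\cdot;\theta)$ has three simple zeros: two stable ones (the thermally dominated and haline dominated modes) separated by an unstable one. Second, at each endpoint $\theta_i$ a stable zero collides with the unstable zero in a saddle-node bifurcation, while outside $[\theta_1,\theta_2]$ only one zero remains. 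Once this fold structure is in place, slowly sweeping $\theta$ upward and then downward traces different stable branches, which is hysteresis.

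\emph{Step 1: rescaling to a one-parameter cubic.} We set $u=\lambda s$, so that $\lambda f = \lambda P - u - \beta u(\theta-u)^2$. Equilibria are then the zeros of the cubic
\[ g(u;\theta)=\beta u(u-\theta)^2+u-\lambda P. \]
We note that $g(0)=-\lambda P<0$ and $g\to\pm\infty$ as $u\to\pm\infty$, so every real zero satisfies $u>0$. Stability of a zero is stable when $\partial_u g>0$, because $f$ is $-g/\lambda$.

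\emph{Step 2: locating the folds.} We compute
\[ \partial_u g=\beta(u-\theta)(3u-\theta)+1. \]
This has two real roots $u_\pm$ exactly when its discriminant is positive, that is, when $\beta\theta^2>3$. Otherwise $g$ is monotone, there is a unique stable equilibrium, and no hysteresis is possible. When $\theta^2>3/\beta$, $g$ has a local maximum at $u_-$ and a local minimum at $u_+$, and there are three equilibria precisely when $g(u_-)>0>g(u_+)$. The saddle-node curves are given by the system $g=\partial_u g=0$. We solve $\partial_u g=0$ for $\theta$ in terms of $u$ and substitute, which gives the fold set as a parametrized curve $(\theta(u),\lambda P(u))$. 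Eliminating $u$ gives it as the zero set of the cubic discriminant
\[ \Delta(\theta,\lambda P)=0. \]

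\emph{Step 3: the main obstacle.} The hard part is showing that, for fixed admissible $P$, the line $\{\lambda P=\text{const}\}$ crosses the fold curve at exactly two values $\theta_1<\theta_2$. Between these values the sign pattern $g(u_-)>0>g(u_+)$ must hold. The crossings must also be nondegenerate saddle-nodes, meaning $\partial_u^2 g\neq0$ (here $\partial_u^2 g=2\beta(3u-2\theta)$) and $\partial_\theta g\neq 0$ (here $\partial_\theta g=-2\beta u(u-\theta)$, which is nonzero at the folds since $u>0$ and $u\ne\theta$).

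Our first attempt is to show that $\theta\mapsto g(u_\pm(\theta);\theta)$ are monotone in $\theta$ on $\theta>\sqrt{3/\beta}$, using the envelope identity
\[ \frac{d}{d\theta}\,g(u_\pm;\theta)=\partial_\theta g(u_\pm;\theta). \]
We then check the limits: as $\theta\downarrow\sqrt{3/\beta}$ the two critical values coincide, and as $\theta\to\infty$ we have $g(u_-)\to+\infty$ and $g(u_+)\to u_+-\lambda P$ with $u_+\approx\theta$. These limits give the two crossings by the intermediate value theorem. If monotonicity fails globally, we fall back on verifying it for Cessi's parameter range, for example $P$ corresponding to $p\approx1$, and illustrating the resulting S-shaped curve numerically.

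Finally, we invoke the standard saddle-node normal form (Sotomayor's conditions, as verified above) at $\theta_1$ and $\theta_2$. Combined with stability read off from the sign of $\partial_u g$, this yields the S-shaped bifurcation diagram in $\theta$ and hence hysteresis, in direct analogy with the $P$-induced saddle-node of \cref{fig:LandscapeSurface_varyP}.
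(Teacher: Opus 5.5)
Your proposal is correct, and it takes a different and more complete route than the paper. The paper's proof stops at what is essentially the first half of your Step 2. It solves $\partial f/\partial S=0$ for $\theta$ at a fixed state $S^*$, obtains $\theta_c=2\lambda S^*\mp\sqrt{(\lambda S^*)^2-1/\beta}$, and reads the two real roots as the two turning points of the loop. It never imposes $f=0$ at a fixed $P$. The two roots attached to one $S^*$ generally lie on different $P$-slices: substituting them into $f=0$ gives different values of $P$. So that computation alone does not show two folds for a single $P$, the three-equilibrium window between them, or nondegeneracy. Your argument supplies all of these. You fix $P$, reduce the count of equilibria to the signs of the critical values $g(u_\pm(\theta);\theta)$, locate the folds by monotonicity plus the intermediate value theorem, read stability off $\partial_u g$, and check Sotomayor's conditions. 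The paper's route buys brevity and an explicit equation for the fold locus in the $(S,\theta)$ plane. Yours gives an actual proof of hysteresis at fixed $P$, together with the exact range of $P$ where it occurs.

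Your Step 3 also closes without any numerical fallback.
\begin{itemize}
\item At $u_\pm=\bigl(2\theta\pm\sqrt{\theta^2-3/\beta}\bigr)/3$ one has $\theta/3<u_\pm<\theta$. The envelope identity therefore gives $\frac{d}{d\theta}g(u_\pm;\theta)=2\beta u_\pm(\theta-u_\pm)>0$ on $\theta>\sqrt{3/\beta}$, so both critical values are strictly increasing.
\item At $\theta=\sqrt{3/\beta}$ both critical values equal $\frac{8}{3\sqrt{3\beta}}-\lambda P$. Both tend to $+\infty$, since $g(u_\pm;\theta)+\lambda P\ge u_\pm>\theta/3$; this is the correct reading of your ``$g(u_+)\to u_+-\lambda P$''.
\item Hence \emph{admissible} means exactly $P>\frac{8}{3\lambda\sqrt{3\beta}}$, which is the paper's $P_{\mathrm{cusp}}$. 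For such $P$, $g(u_-)$ vanishes exactly once, at $\theta_1$, and $g(u_+)$ exactly once, at $\theta_2>\theta_1$, because $g(u_-)>g(u_+)$. Bistability then holds exactly on $(\theta_1,\theta_2)$. For $P\le P_{\mathrm{cusp}}$ there is no hysteresis in $\theta$ at all.
\item The remaining nondegeneracy condition is immediate: $\partial_u^2 g(u_\pm;\theta)=\pm2\beta\sqrt{\theta^2-3/\beta}\neq0$.
\end{itemize}

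One small repair in Step 1: $g(0)<0$ and the limits at $\pm\infty$ do not by themselves force all zeros to be positive. What does is that for $u\le0$ the terms $\beta u(u-\theta)^2$ and $u$ are nonpositive while $-\lambda P<0$, so $g<0$ there.
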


The structure of this bifurcation is illustrated in Figures \ref{fig: LandscapeSurface_varyTheta}, shows the potential landscape of the salinity difference dynamics as a function of both the state variable and the equilibrium temperature difference $\theta$, for fixed freshwater forcing $P=4.89$. Drawn on the surface is the bifurcation diagram, subject to variations in $\theta$, of the salinity difference $s$, which is defined below in \cref{sect: main result} as a shifted representation of $\Delta S$. This diagram reveals two saddle nodes meaning there exists a hysteresis loop.  

The one–dimensional slices shown in Figure \ref{fig: 1D_Landscape_thetavary} demonstrate how the potential landscape is modified as $\theta$ varies. For a finite range of temperature differences, the landscape admits two local minima separated by an unstable saddle, corresponding to bistable circulation states. Outside of this range, the landscape collapses to a single global minimum, and the circulation becomes globally stable. In particular, the potential landscape corresponding to the value used in Cessi’s analysis, $\theta = 20$, lies within the bistable regime, whereas for smaller values such as $\theta = 17$ the system admits only a single stable equilibrium.

Quantitatively, we find that bistability occurs for equilibrium temperature differences in the range
\[18.6^{\circ} C < \theta < 22.8 ^{\circ} C.\]

\noindent Outside of this interval, the salinity difference dynamics settle into a globally stable regime. As in the case of freshwater forcing, sufficiently sustained variations in $\theta$ can drive the system across basin boundaries in the potential landscape, leading to transitions between circulation states. In the bistable regime, the dynamics preferentially evolve toward the higher–salinity–difference attractor under increased temperature gradients.

Taken together, \ref{fig: LandscapeSurface_varyTheta} and Figures \ref{fig: 1D_Landscape_thetavary} demonstrate that the bistable dynamics of the thermohaline circulation are supported only over a finite range of meridional temperature differences. This result highlights that not only freshwater forcing, but also the equilibrium temperature contrast between the northern and southern ocean, plays a critical role in determining the stability regime of the circulation and its susceptibility to tipping.

The natural question is, then, how varying both $P$ and $\theta$ together modulates the potential landscape and what this means for the dynamics. To address this, in the following section we use Ren\'e Thome's framework for cusp catastrophes \cite{ThomRené1989}.

\begin{figure}[h]
    \centering
    \includegraphics[width=0.8\linewidth]{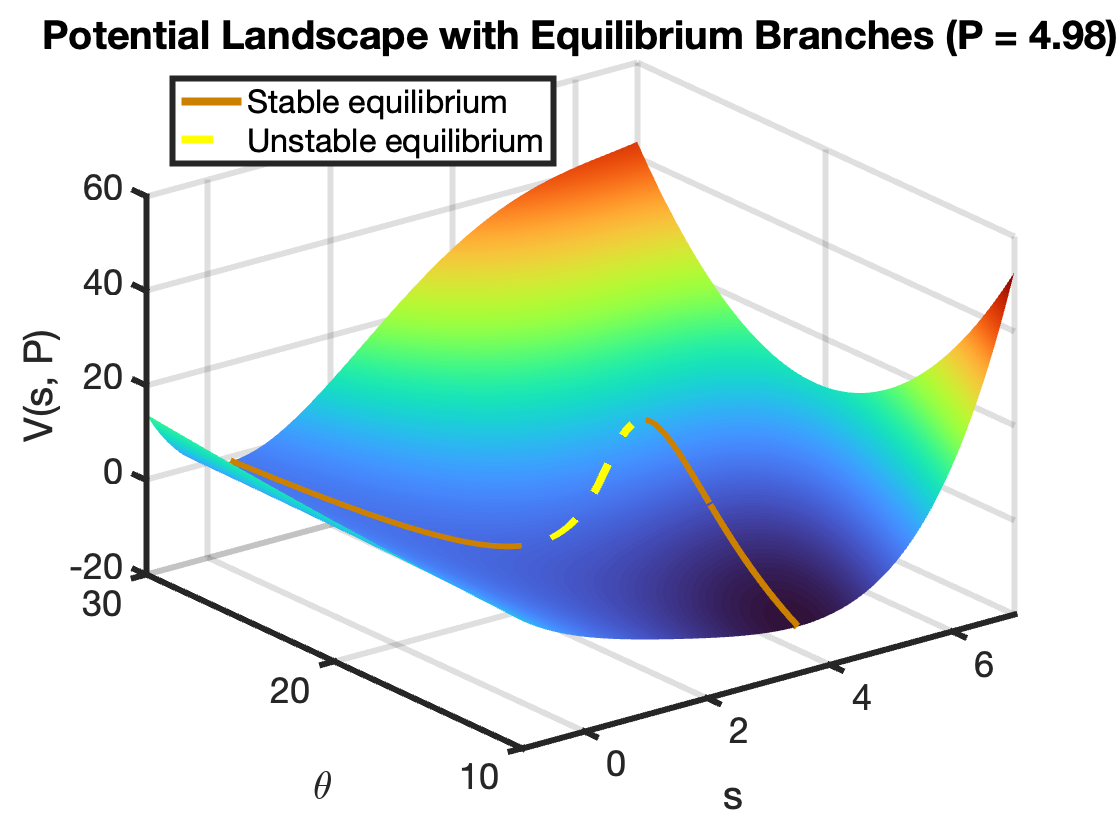}
    \caption{3D potential energy landscape $V(y, \theta)$, visualizing how the depth and number of stability wells evolve as the equilibrium temperature gradient $\theta$ varies.}
    \label{fig: LandscapeSurface_varyTheta}
\end{figure}

\begin{figure}[h]
    \centering
    \includegraphics[width=0.9\linewidth]{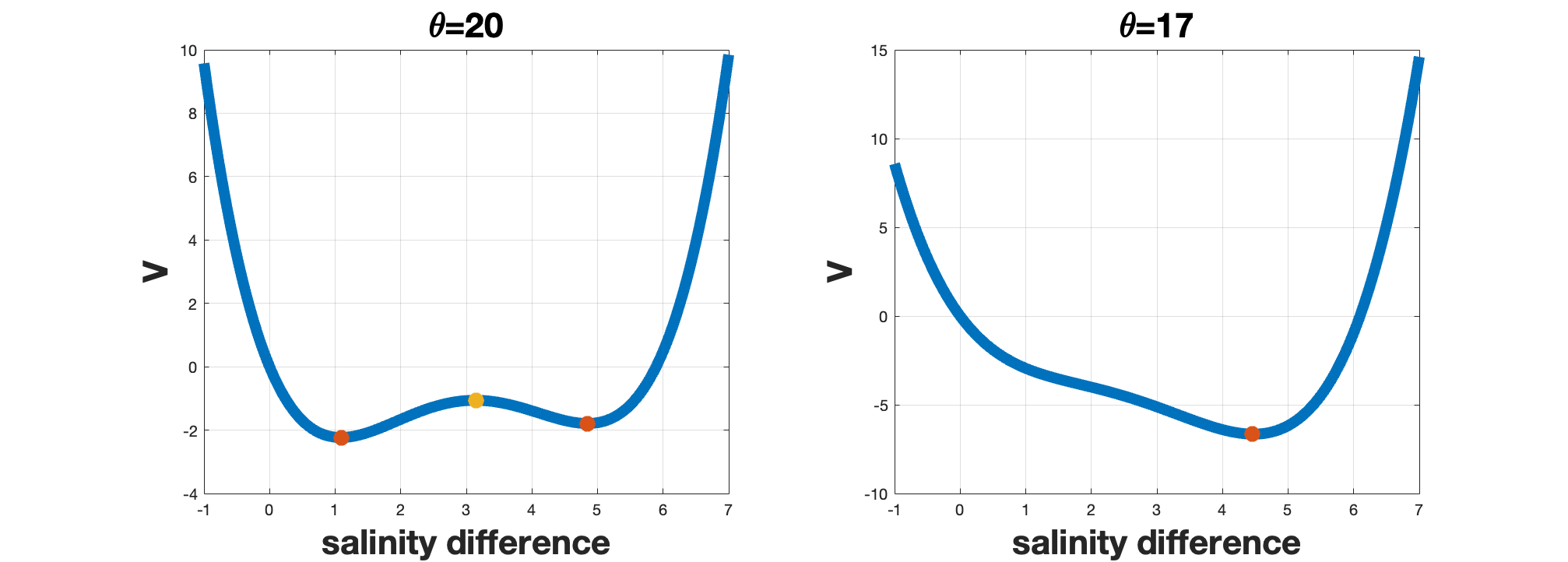}
    \caption{Cross-sections of the potential landscape for $\theta=17$ (globally stable) and $\theta=20$ (bistable), demonstrating how a sufficiently low temperature gradient eliminates the second stable equilibrium.}
    \label{fig: 1D_Landscape_thetavary}
\end{figure}


\section{Cusp analysis of Cessi's Model}\label{sect: main result}

In this section, we show the existence of a cusp bifurcation in Cessi's model under joint variation of $P$ and $\theta$. This is stated in \cref{thm: cusp theorem}, the proof of which can be found in the supplementary material.

René Thome's cusp bifurcation occurs in a cubic ordinary differential equation with two parameters. It has the form
    \begin{align}
        \dot w = h + w (r-w^2) = h + rw - w^3 ,\label{eqn: cusp canonical form}
    \end{align}
    where $w$ is the response variable, and $h$ and $r$ are the parameters to be varied. In the simplest case, both parameters are scalars and the cusp analysis is done graphically by intersecting $g_1 = -h$ and $g_2 = rw-w^3$ and placing $-h$ such that the fixed point is a repeated root of the intersection. In essence, this analysis locates parameter pairs $(r, h)$ where cubic discriminant transitions from negative to positive. To begin, we transform the 1-d model \eqref{eqn: theorem unnormalized salinity dyanmics} into the canonical form used in \cite{ Strogatz, ThomRené1989, Zeeman1976} by using a Tschirnhaus transformation. This transformation is used to depress the quadratic term of a cubic polynomial.

    \begin{lemma}[Tschirnhaus Transformation]\label{lemma: tschirnhaus transformation}
        The Tschirnhaus transformation of equation \cref{eqn: theorem unnormalized salinity dyanmics} is \begin{equation*}
            s=\Delta S+\frac{2\theta}{3\lambda}.
        \end{equation*}
    \end{lemma}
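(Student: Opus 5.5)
The approach is to expand the right-hand side of \eqref{eqn: theorem unnormalized salinity dyanmics} as a cubic polynomial in $\Delta S$ and read off the shift that removes its quadratic term. This is the classical Tschirnhaus recipe: for $a x^3 + b x^2 + c x + d$, the substitution $x = t - \frac{b}{3a}$ eliminates the $t^2$ term.

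First I will expand the exchange term, writing $f(\Delta S) = P - \Delta S - \beta \Delta S(\theta - \lambda\Delta S)^2$:
\[
f(\Delta S) = -\beta\lambda^2 \Delta S^3 + 2\beta\theta\lambda\,\Delta S^2 - (1+\beta\theta^2)\Delta S + P .
\]
This gives $a = -\beta\lambda^2$ and $b = 2\beta\theta\lambda$. Hence
\[
-\frac{b}{3a} = \frac{2\beta\theta\lambda}{3\beta\lambda^2} = \frac{2\theta}{3\lambda}.
\]
So the Tschirnhaus shift attached to \eqref{eqn: theorem unnormalized salinity dyanmics} is $+\frac{2\theta}{3\lambda}$, and the transformation consists of replacing $\Delta S$ by $s + \frac{2\theta}{3\lambda}$.

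I will read the lemma's formula as naming exactly this operation: the new variable $s$ is introduced by adding $\frac{2\theta}{3\lambda}$ in place of $\Delta S$, so that $\Delta S$ is the shifted copy of $s$. This is the only reading under which the stated shift depresses the cubic. The literal inverse $s = \Delta S + \frac{2\theta}{3\lambda}$, i.e.\ $\Delta S = s - \frac{2\theta}{3\lambda}$, would double the quadratic coefficient rather than cancel it. I will note this explicitly in the proof so that the sign convention is unambiguous for the cusp analysis that follows.

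Next I will substitute $\Delta S = s + \frac{2\theta}{3\lambda}$ and verify that the $s^2$ coefficient vanishes. The cubic term contributes $-3\beta\lambda^2\cdot\frac{2\theta}{3\lambda} = -2\beta\theta\lambda$ to that coefficient, and the quadratic term contributes $+2\beta\theta\lambda$; these cancel. As a by-product I will record the resulting linear and constant coefficients. The linear coefficient is $-1 - \frac{\beta\theta^2}{3}$, up to checking. The constant term is $P$ plus an explicit polynomial in $\theta$. Dividing by $\beta\lambda^2$ and rescaling time then puts the equation in the form \eqref{eqn: cusp canonical form}.

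The computation itself is routine. The only real obstacle is the direction of the shift, since a sign slip there is what separates a depressed cubic from a non-depressed one. I will therefore organize the argument around the verified cancellation of the quadratic coefficient rather than around the formula alone.
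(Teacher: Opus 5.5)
Your proposal is correct and follows the paper's proof: the paper also writes $\Delta S = s + k$, requires the $s^2$ coefficient $-3\beta\lambda^2 k + 2\beta\lambda\theta$ to vanish, and concludes $\Delta S = s + \frac{2\theta}{3\lambda}$, so treating the stated $s = \Delta S + \frac{2\theta}{3\lambda}$ as a sign slip is exactly what the paper's own proof and the subsequent Corollary use. One correction to your by-product: the linear coefficient is $\frac{\beta\theta^2}{3} - 1$, not $-1 - \frac{\beta\theta^2}{3}$ (the contributions are $-(1+\beta\theta^2) + \frac{8\beta\theta^2}{3} - \frac{4\beta\theta^2}{3}$), and it is this sign change at $\beta\theta^2 = 3$ that makes the later cusp analysis possible.
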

    \noindent We now restate equation \eqref{eqn: theorem unnormalized salinity dyanmics} using the shifted variable $s$.
    \begin{corollary}[Depressed Dynamics]\label{cor: shifted salinity dynamics}
        The salinity difference dynamics \eqref{eqn: theorem unnormalized salinity dyanmics} expressed in the shifted variable $s$ is 
        \begin{equation}\label{eqn: depressed cubic salinity dynamics}
            \dot s = h(\theta, P) + r(\theta) s  - \beta \lambda^2s^3 ,
        \end{equation}
        where $r(\theta) = (\frac{\beta\theta^2}{3}-1)  $ and $h(\theta, P) = P - \frac{2\theta}{3 \lambda}-\frac{2 \beta \theta^3}{27 \lambda}$.
    \end{corollary}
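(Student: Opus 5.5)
The plan is a direct substitution. Because $\theta$ and $\lambda$ are constant in time, the shift in \cref{lemma: tschirnhaus transformation} has zero time derivative, so $\dot s = \frac{d\Delta S}{dt'}$. It therefore suffices to rewrite the right-hand side of \eqref{eqn: theorem unnormalized salinity dyanmics} as a polynomial in $s$ and collect powers. The paper states the shift as $s=\Delta S+\frac{2\theta}{3\lambda}$, but the coefficients $r(\theta)$ and $h(\theta,P)$ in \eqref{eqn: depressed cubic salinity dynamics} come out only with the opposite sign. So I will substitute $\Delta S = s + \frac{2\theta}{3\lambda}$, i.e.\ $s=\Delta S-\frac{2\theta}{3\lambda}$. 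This shift is the one that removes the quadratic term, since the coefficient of $\Delta S^2$ is $2\beta\lambda\theta$ and the leading coefficient is $-\beta\lambda^2$.

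First, I factor $\lambda$ out of the exchange term by writing $u=\theta/\lambda$. This gives $\Delta S\,\beta(\theta-\lambda\Delta S)^2 = \beta\lambda^2\,\Delta S(\Delta S-u)^2$, so the right-hand side becomes $P-\Delta S-\beta\lambda^2\,\Delta S(\Delta S-u)^2$. Substituting $\Delta S = s+\tfrac{2u}{3}$ gives $\Delta S - u = s-\tfrac{u}{3}$.

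The key step is expanding the cubic:
\begin{equation*}
\Bigl(s+\tfrac{2u}{3}\Bigr)\Bigl(s-\tfrac{u}{3}\Bigr)^2 = s^3 - \tfrac{u^2}{3}s + \tfrac{2u^3}{27}.
\end{equation*}
Here the two $s^2$ contributions, $-\tfrac{2u}{3}s^2$ and $+\tfrac{2u}{3}s^2$, cancel. This cancellation is exactly the depression of the cubic.

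Multiplying by $-\beta\lambda^2$ and using $\beta\lambda^2u^2=\beta\theta^2$ and $\beta\lambda^2u^3=\beta\theta^3/\lambda$ produces three terms: $-\beta\lambda^2 s^3$, the linear term $\tfrac{\beta\theta^2}{3}s$, and the constant $-\tfrac{2\beta\theta^3}{27\lambda}$. The linear part $-\Delta S$ contributes a further $-s-\tfrac{2\theta}{3\lambda}$. Collecting everything gives:

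\begin{itemize}
\item the linear coefficient $\tfrac{\beta\theta^2}{3}-1=r(\theta)$;
\item the constant $P-\tfrac{2\theta}{3\lambda}-\tfrac{2\beta\theta^3}{27\lambda}=h(\theta,P)$;
\item the cubic coefficient $-\beta\lambda^2$.
\end{itemize}

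This is exactly \eqref{eqn: depressed cubic salinity dynamics}.

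The only delicate point is not the algebra but the sign convention of the shift in \cref{lemma: tschirnhaus transformation}, as noted above. With the sign chosen to match the stated $r$ and $h$, the remaining work is routine bookkeeping of the powers of $\lambda$.
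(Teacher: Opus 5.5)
Your proposal is correct and follows the paper's proof: substitute $\Delta S = s + \frac{2\theta}{3\lambda}$ into \eqref{eqn: theorem unnormalized salinity dyanmics} and collect powers of $s$, with your $u=\theta/\lambda$ factoring only making explicit the expansion that the paper skips. Your sign correction also matches the paper's own proofs (both this one and the Tschirnhaus lemma's), which substitute $\Delta S = s + \frac{2\theta}{3\lambda}$, so the shift $s=\Delta S+\frac{2\theta}{3\lambda}$ as written in the Tschirnhaus lemma is a sign typo.
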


The dynamics of \eqref{eqn: theorem unnormalized salinity dyanmics} are preserved under the Tschirnhaus transformation because $s$ is simply a translation of $\Delta S$. Moreover, the factor $\beta\lambda^2$ that scales the leading cubic in \eqref{eqn: depressed cubic salinity dynamics} is positive, effectively stretching the cubic vertically.

    We aim to locate pairs of the temperature equilibrium difference $\theta$ and the freshwater forcing $P$ that critically transition the circulation dynamics from a bistable regime to a globally stable regime. This means finding the roots of \eqref{eqn: depressed cubic salinity dynamics} for different values of $P$ and $\theta$.
    Referring to the canonical form \eqref{eqn: cusp canonical form}, this is typically achieved by fixing a value of $r$ and inspecting graphically changes in the intersections of the cubic $g_2 = rw-w^3$ with the horizontal line $g_1 = h$. On the one hand, $r\leq 0$ corresponds to one intersection, i.e. one fixed point; on the other hand $r >0$ corresponds to two or three intersections, where two intersections marks the pitchfork bifurcation value. However, since the coefficients $r$ and $h$ in \eqref{eqn: depressed cubic salinity dynamics} are nonlinear in $\theta$, a given intersection of $g_1$ and $g_2$ may occur for several values of $\theta$. Thus, we instead characterize roots of \eqref{eqn: depressed cubic salinity dynamics} using its cubic discriminant as a function of $P$ and $\theta$.

    The cubic discriminant is given by 
    \begin{equation} \label{eqn: cubic discriminant}
    \Delta (\theta, P) = 4 (r(\theta)/\beta\lambda^2)^3 - 27 (h(\theta, P)/\beta\lambda^2)^2 
    \end{equation}
    Figure \ref{fig: discriminant} demonstrates the discriminant surface of the salininty dynamics as $\theta$ and $P$ are jointly varied. Parameter pairs outside of the region enclosed by the black curve represent parameters which admit a single real root of the right hand side of \eqref{eqn: depressed cubic salinity dynamics}, and pairs inside the region represent parameters which admit three real roots. Pairs lying on the black curve represent parameter pairs that admit two real roots, one of which is of multiplicity two. This leads to  following lemma.

    \begin{lemma}[Discriminant of Salinity Dynamics]\label{lemma: discriminant}
    The solution set of $(\theta, P)$ satisfying the discriminant condition \eqref{eqn: cubic discriminant} is nonempty. For this nontrivial set, the reduced salinity difference dynamics \eqref{eqn: depressed cubic salinity dynamics} varies smoothly from negative to positive.
    \end{lemma}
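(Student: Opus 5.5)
The statement is loosely phrased, so I will read it as two claims. First, the zero set $\{(\theta,P):\Delta(\theta,P)=0\}$ of \eqref{eqn: cubic discriminant} is nonempty. Second, as $(\theta,P)$ crosses this set, $\Delta$ changes sign smoothly, moving the depressed dynamics \eqref{eqn: depressed cubic salinity dynamics} from one real root to three. My plan is to handle both by solving $\Delta=0$ explicitly for $P$ as a function of $\theta$.

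\textbf{Step 1: simplify the discriminant.} Multiply $\Delta$ by the positive constant $(\beta\lambda^2)^3$. Using \cref{cor: shifted salinity dynamics}, the condition becomes
\[
4\, r(\theta)^3 = 27\,\beta\lambda^2\, h(\theta,P)^2 .
\]
Since the right side is nonnegative, solutions require $r(\theta)\ge 0$, that is, $\theta^2 \ge 3/\beta$. For any such $\theta$ I then solve the quadratic in $h$ directly:
\[
h = \pm \frac{2\, r(\theta)^{3/2}}{3\sqrt{3\beta}\,\lambda}.
\]
Because $h$ is affine in $P$ with unit coefficient, this gives two explicit branches
\[
P_\pm(\theta)= \frac{2\theta}{3\lambda}+\frac{2\beta\theta^3}{27\lambda}\pm \frac{2\,(\beta\theta^2/3-1)^{3/2}}{3\sqrt{3\beta}\,\lambda}.
\]
These are real-valued and continuous on $\theta\ge\sqrt{3/\beta}$, and smooth where $r(\theta)>0$. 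This proves the set is nonempty; in fact it is a union of two curves. The two branches meet at the cusp point $\theta_c=\sqrt{3/\beta}$, $P_c=P_\pm(\theta_c)$, where $r=h=0$. As a sanity check I would plug in Cessi's values and verify that $P=4.89$ meets the branches near $\theta\approx18.6$ and $22.8$.

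\textbf{Step 2: the sign change.} For fixed $\theta>\theta_c$, $\Delta$ is a downward-opening quadratic in $P$:
\[
\Delta = 4(r/\beta\lambda^2)^3 - 27\,(P-c(\theta))^2/(\beta\lambda^2)^2 ,
\]
where $c(\theta)$ collects the $P$-independent part of $h$. So $\Delta>0$ exactly for $P_-(\theta)<P<P_+(\theta)$ and $\Delta<0$ outside this interval. Moreover,
\[
\partial_P\Delta = -54\,(P-c)/(\beta\lambda^2)^2 ,
\]
which is nonzero on each branch because $P_\pm\ne c$ when $r>0$. By the implicit function theorem the zero set is therefore a smooth curve away from the cusp, and $\Delta$ crosses it transversally, going from negative to positive. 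By the standard discriminant classification this crossing is the passage from one real root, through a double root (a saddle-node), to three real roots of the right-hand side of \eqref{eqn: depressed cubic salinity dynamics}. I would also note that when $r<0$ we get $\Delta<0$ for every $P$, which matches the canonical picture of \eqref{eqn: cusp canonical form}.

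\textbf{Main obstacle.} The hard part is the cusp point itself. There $\nabla\Delta=0$, so the "smooth" claim has to be weakened to continuity, or restricted to $\theta>\theta_c$. To handle this I would expand $P_\pm$ near $\theta_c$: the difference $P_+-P_-$ scales like $(\theta-\theta_c)^{3/2}$, which exhibits the usual semicubical cusp, and the statement holds everywhere except that single point.
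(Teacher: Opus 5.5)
Your argument is correct, but it takes a different route from the paper. The paper's proof only derives $\Delta=4(r/\beta\lambda^2)^3-27(h/\beta\lambda^2)^2$ from the depressed-cubic discriminant $-4A^3-27B^2$ with $A=-r/\beta\lambda^2$ and $B=-h/\beta\lambda^2$. It then reads off both claims from a numerical plot of the discriminant surface: that the zero set is nonempty, and that $\Delta$ varies smoothly from negative to positive.

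You instead solve $4r^3=27\beta\lambda^2h^2$ in closed form. This buys you several things:
\begin{itemize}
\item explicit fold curves $P_\pm(\theta)$;
\item a constructive proof of nonemptiness valid for all $\beta,\lambda>0$;
\item an exact description of the three-root region, namely $P_-<P<P_+$ for $\theta>\sqrt{3/\beta}$, and $\Delta<0$ whenever $r<0$;
\item transversality via $\partial_P\Delta\neq 0$ away from the cusp.
\end{itemize}
As a by-product you recover the cusp point $r=h=0$, i.e.\ $\bigl(\sqrt{3/\beta},\,8/(3\lambda\sqrt{3\beta})\bigr)$, which matches the paper's Cusp Theorem, together with the $(\theta-\theta_c)^{3/2}$ opening of the cusp. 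You also pin down where the lemma's ``smoothly'' must be qualified: at the cusp, where $\nabla\Delta=0$. The paper's figure-based argument glosses over this.

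Your reading of a generic crossing as a saddle-node (double root) is also more accurate than the paper's remark that the zero set ``points to'' a pitchfork. The triple-root, pitchfork-type degeneracy occurs only at the cusp point itself. The paper's route is shorter, but it only illustrates the plotted parameter values rather than proving anything.

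Two minor caveats:
\begin{itemize}
\item The condition $r\ge 0$ means $|\theta|\ge\sqrt{3/\beta}$. So the zero set is ``two curves'' only after you restrict to $\theta>0$ on physical grounds, as the paper tacitly does in calling its cusp point unique.
\item Your numerical sanity check needs values of $\beta$ and $\lambda$ that the paper never lists explicitly. It is not part of the logic.
\end{itemize}
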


\begin{figure}[h!]
    \centering
    \includegraphics[width=0.5\linewidth]{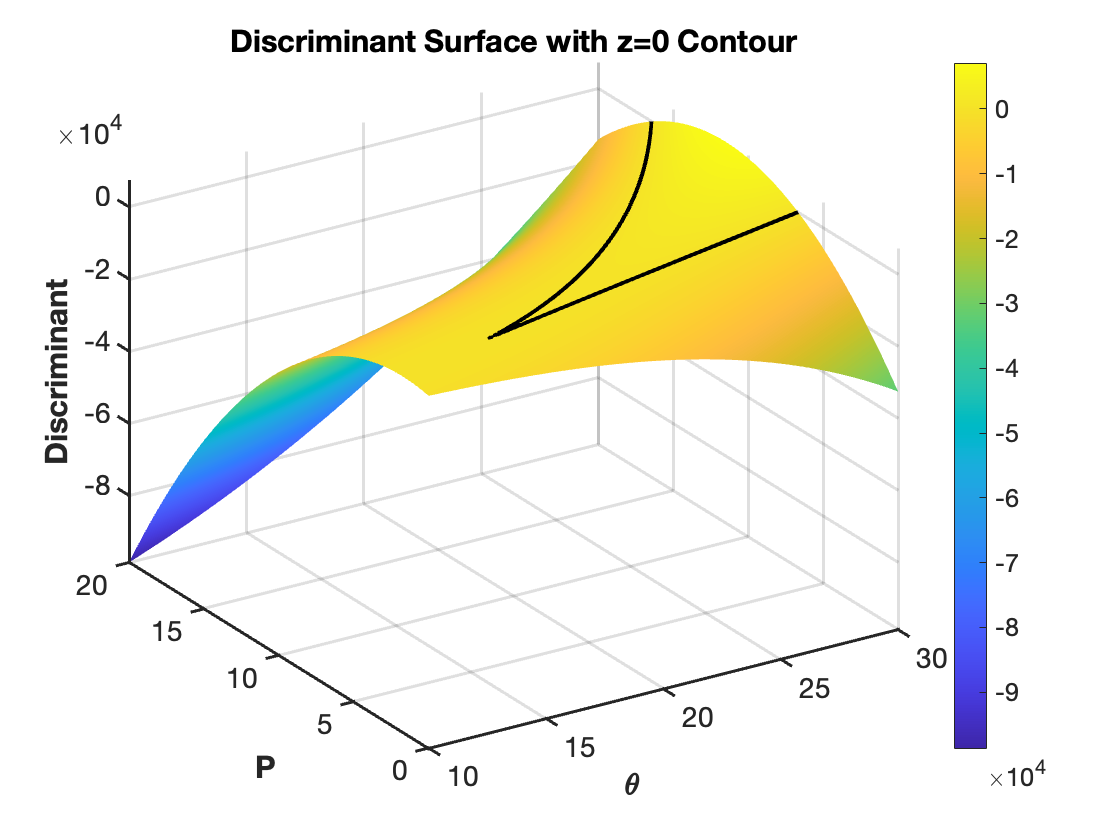}
    \caption{The discriminant surface $\Delta(\theta, P)$ in parameter space. The zero-level contour (black curve) defines the fold curves that bound the region of bistability.}
    \label{fig: discriminant}
\end{figure}

Figure \ref{fig: discriminant} demonstrates a smooth variation of the discriminant $\Delta(\theta, P)$. Variations in the parameters alter the discriminant of the cubic, governing both the number and stability of equilibria and inducing transitions between globally stable and bistable regimes. Crucially, this bifurcation structure is organized by a codimension-two singularity—the cusp point—which serves as the organizing center for the system's dynamics. Thus, the transformed model exhibits a full cusp bifurcation.

\begin{theorem}[Cusp Theorem]\label{thm: cusp theorem}
  The system \cref{eqn: depressed cubic salinity dynamics} undergoes a cusp bifurcation at critical parameters. 
\end{theorem}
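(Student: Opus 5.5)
Our strategy is to verify the standard nondegeneracy conditions for a codimension-two cusp, in the normal form of \cref{cor: shifted salinity dynamics}. Write
\[
f(s;\theta,P)=h(\theta,P)+r(\theta)s-\beta\lambda^2 s^3 .
\]
A cusp point $(s_0,\theta_0,P_0)$ requires three things: $f=f_s=f_{ss}=0$ there, $f_{sss}\neq 0$, and the parameter map $(\theta,P)\mapsto(h,r)$ must be a local diffeomorphism near $(\theta_0,P_0)$. Given these, the system is locally smoothly equivalent to the canonical form \eqref{eqn: cusp canonical form}, and the cusp bifurcation follows from the standard theory \cite{ThomRené1989, Zeeman1976}.

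We first locate the critical point. Since $f_{ss}=-6\beta\lambda^2 s$, the condition $f_{ss}=0$ forces $s_0=0$. Then $f_s(0)=r(\theta)=0$ gives $\beta\theta_0^2=3$, so $\theta_0=\sqrt{3/\beta}$ (the physically relevant positive root). Finally $f(0)=h(\theta_0,P)=0$ fixes
\[
P_0=\frac{2\theta_0}{3\lambda}+\frac{2\beta\theta_0^3}{27\lambda}=\frac{8\theta_0}{9\lambda}.
\]
The cubic nondegeneracy is immediate, since $f_{sss}=-6\beta\lambda^2\neq 0$ because $\beta,\lambda>0$. This point is also exactly where the discriminant \eqref{eqn: cubic discriminant} has $r=h=0$, which is the tip of the zero contour described in \cref{lemma: discriminant}.

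The transversality step compares the map to the unfolding parameters. The Jacobian of $(\theta,P)\mapsto(r,h)$ is
\[
\begin{pmatrix} 2\beta\theta/3 & 0\\ \partial_\theta h & 1\end{pmatrix},
\]
with determinant $2\beta\theta_0/3\neq 0$. By the inverse function theorem, $(r,h)$ are therefore valid local coordinates on parameter space near $(\theta_0,P_0)$.

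To reach the canonical form we rescale. Setting $s=w/\sqrt{\beta\lambda^2}$ turns the cubic coefficient into $-1$, and a constant time rescaling absorbs the remaining factor; positive scalings preserve the dynamics. The bifurcation set is then $4r^3=27\beta\lambda^2h^2$, which is the zero set of $\Delta$. Its two branches are saddle-node (fold) curves meeting tangentially at the cusp point. Inside the curve there are three equilibria with signs of $f_s$ giving stable, unstable, stable; outside there is a single stable equilibrium.

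The main obstacle is a subtlety in this last step: $r$ depends only on $\theta$, while $h$ depends on both $\theta$ and $P$. One must check that the fold curves in the $(\theta,P)$ plane genuinely form a cusp there, rather than the nonlinear $\theta$-dependence folding the picture over. The Jacobian computation settles this locally, and we would state the theorem as a local result near $(\theta_0,P_0)$.
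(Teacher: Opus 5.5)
Your proof is correct and reaches the paper's point, since $P_0=8\theta_0/(9\lambda)=8/(3\lambda\sqrt{3\beta})$ when $\theta_0=\sqrt{3/\beta}$. The core is also the paper's: impose $f=f_S=f_{SS}=0$ and check the cubic coefficient. The paper does this in the unshifted variable $S=\Delta S$. Its curvature condition gives $S=2\theta/(3\lambda)$, which is exactly your $s_0=0$. It then finds $S_c=2/(\lambda\sqrt{3\beta})$, $\theta_c$ and $P_c$, and stops at $f_{SSS}=-6\beta\lambda^2\neq0$. Working in the depressed form collapses the three conditions to $s_0=0$, $r(\theta_0)=0$, $h(\theta_0,P_0)=0$.

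The substantive difference is your transversality step, which the paper omits. In the paper's variables the same condition reads $f_\theta f_{SP}-f_P f_{S\theta}=-\beta\lambda S_c\neq0$. Without it, the degeneracy conditions plus $f_{SSS}\neq0$ only show that the single vector field at $(\theta_c,P_c)$ has a cusp-type equilibrium. They do not show that the two-parameter family in $(\theta,P)$ unfolds it generically, so your argument is the more complete one.

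Your folding-over worry can in fact be dropped globally on the physical half-plane. For $\theta>0$, $r$ is strictly increasing in $\theta$ and $h$ is $P$ minus a function of $\theta$. Hence $(\theta,P)\mapsto(r,h)$ is a diffeomorphism of $\{\theta>0\}\times\mathbb{R}$ onto $\{r>-1\}\times\mathbb{R}$, and the whole three-equilibrium region is the preimage of $\{4r^3>27\beta\lambda^2h^2\}$.

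Two minor points. No time rescaling is needed, since $s=w/(\lambda\sqrt{\beta})$ already gives $\dot w=\lambda\sqrt{\beta}\,h+rw-w^3$. And uniqueness of the cusp point genuinely depends on your restriction to $\theta>0$: the odd symmetry $f(-S,-\theta,-P)=-f(S,\theta,P)$ produces a mirror cusp at $\theta=-\sqrt{3/\beta}$, which the paper's uniqueness claim overlooks.
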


\begin{figure}[h!]
    \centering
    \includegraphics[width=0.8\linewidth]{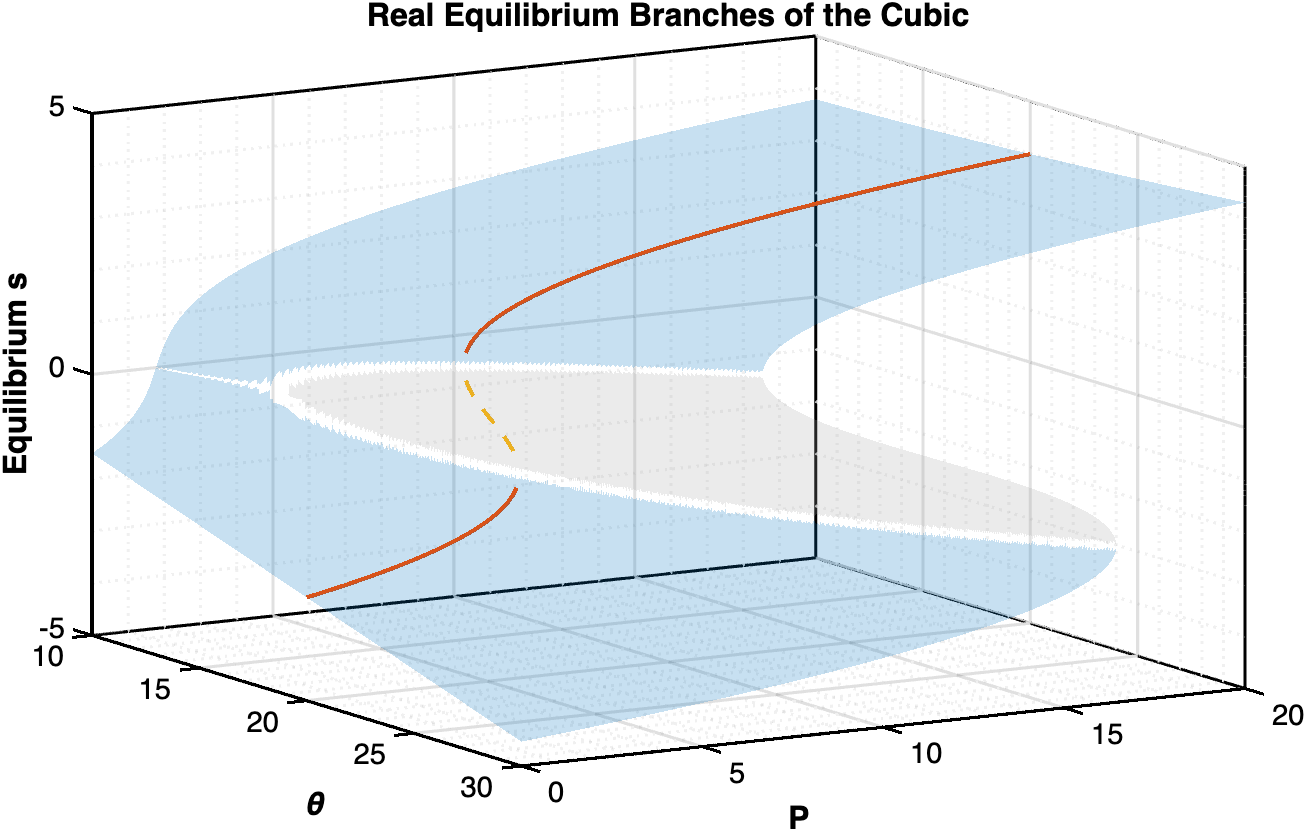}
    \caption{The cusp bifurcation surface for the shifted system \cref{eqn: depressed cubic salinity dynamics}. The equilibrium manifold is organized by a codimension-two cusp singularity, where the pleated region represents bistability between thermally-dominated ($s<0$) and salinity-dominated ($s>0$) modes.}
    \label{fig: cusp}
\end{figure}

The main result is Theorem \ref{thm: cusp theorem}, which establishes the existence of a cusp bifurcation in the one-dimensional Cessi box model \eqref{eqn: depressed cubic salinity dynamics} representing a shifted salinity difference $s$. Under joint variation of freshwater forcing $P$ and the temperature-gradient equilibrium parameter $\theta$, the resulting equilibrium surface, shown in Figure \ref{fig: cusp}, characterizes the dependence of salinity equilibria on these two parameters. Parameter pairs $(\theta, P)$ lying within the folded (pleat) region correspond to bistable regimes, while regions outside the fold admit a single equilibrium. The blue region denotes globally stable equilibria of $s$, and the light grey region denotes unstable equilibria. The two modes of the thermohaline circulation are recovered

The curve corresponding to Cessi’s original bifurcation analysis is recovered as a one-parameter slice of this surface at $\theta=20$ and $P=4.98$. The fold curves delineate critical parameter values at which saddle–node bifurcations occur, with respect to each individual parameter, and their intersection defines the cusp point. Across this boundary, the system transitions from a regime in which equilibria vary smoothly with parameters to one in which small parameter changes produce abrupt transitions between distinct equilibria.

Variations in $\theta$ alter the extent of the bistable region in $P$. For fixed $P=4.98$, increasing $\theta$ shifts the system toward a globally stable equilibrium, while decreasing $\theta$ narrows the range of freshwater forcing admitting bistability. Below a critical value of $\theta$, the fold curves vanish and bistability is lost entirely. These results describe how the location and structure of the cusp depend on both control parameters.


\subsection*{Discussion}
Motivated by observed increases in global mean temperature and polar amplification, we revisited Cessi’s (1994) conceptual model to examine whether changes in the background temperature gradient alone can induce tipping behavior in ocean circulation. Cessi’s original analysis, based on a reduced form of Stommel’s two-box model, exploited a separation of timescales in which the temperature gradient relaxes rapidly to equilibrium relative to salinity. This assumption allowed the system to be reduced to a single prognostic variable—the deviation of salinity from equilibrium—while treating freshwater input to the polar box as a control parameter. Within this framework, Cessi demonstrated that sufficiently strong freshwater forcing could drive the system across a bifurcation, producing abrupt transitions in circulation strength.

In contrast, the present work fixes the freshwater forcing and instead treats the equilibrium temperature gradient as a control parameter. Despite this reversal in perspective, the qualitative behavior of the system remains similar: small, smooth changes in the control parameter can trigger large and abrupt changes in the system’s dynamical state. However, the nature of the response differs depending on the forcing pathway. Freshwater forcing tends to destabilize strong salinity gradients, while temperature-gradient forcing favors transitions toward a single globally stable state characterized by weaker salinity differences. This asymmetry highlights that distinct physical mechanisms can produce qualitatively similar tipping behavior while leading to different equilibrium outcomes.

The cusp bifurcation identified here provides a unifying framework for these behaviors by embedding both forcing mechanisms within a two-parameter stability landscape. Cessi’s original bifurcation diagram emerges as a one-parameter slice of this surface, while the full cusp geometry clarifies how freshwater forcing and background temperature structure interact to govern the existence and extent of bistability. In particular, weakening the temperature gradient contracts the bistable regime and ultimately eliminates it, indicating that the system’s susceptibility to abrupt transitions depends not only on external perturbations but also on the ambient climatic state.

These results suggest that AMOC-like circulation dynamics are vulnerable to tipping through multiple pathways. Changes in background temperature gradients—such as those associated with polar amplification—can either enhance sensitivity to freshwater forcing or remove bistability altogether, fundamentally altering the system’s response to perturbations. More broadly, this work emphasizes that tipping behavior in climate systems should be understood as a multi-parameter phenomenon, where slow, large-scale environmental changes can reshape stability landscapes and modulate the likelihood of abrupt transitions.

\section*{Acknowledgments}
Thank you to Dick McGehee, C. Egan, UMN Math Climate Seminar, and the Mathematics of Climate Research Network for helpful discussions and drafting feedback. This author was partially supported by a GAANN Fellowship under Award P200A240046.

\bibliographystyle{siamplain}
\bibliography{references}

\section*{Supplementary Material}
\input{ex_supplement}

\end{document}

%% file: ex_shared.tex
\usepackage{lipsum}
\usepackage{amsfonts,amsmath, amssymb}
\usepackage{graphicx}
\usepackage{epstopdf}
\usepackage{algorithmic}
\ifpdf
  \DeclareGraphicsExtensions{.eps,.pdf,.png,.jpg}
\else
  \DeclareGraphicsExtensions{.eps}
\fi

\newsiamremark{remark}{Remark}
\newsiamremark{hypothesis}{Hypothesis}
\crefname{hypothesis}{Hypothesis}{Hypotheses}
\newsiamthm{claim}{Claim}
\newsiamremark{fact}{Fact}
\crefname{fact}{Fact}{Facts}
\newcommand{\newreptheorem}[2]{%
\newenvironment{rep#1}[1]{%
 \def\rep@title{#2 \ref{##1}}%
 \begin{rep@theorem}}%
 {\end{rep@theorem}}}
\makeatother   

\newreptheorem{theorem}{Theorem}
\newreptheorem{lemma}{Lemma}

\headers{Cusp Bifurcation in Conceptual Thermohaline Circulation Model}{J. Noory}

\title{Cusp Bifurcation in Conceptual Thermohaline Circulation Model\thanks{Submitted to the editors 02/20206.
\funding{This work was funded in part by US Department of Education GAANN Award P200A240046}}}

\author{Jasmine Noory\thanks{University of Minnesota 
  (\email{noory003@umn.edu}\url{}).}}

\usepackage{amsopn}


%% file: ex_supplement.tex

\section{Model Reformulation}

\begin{lemma} \label{lemma: new stommel formulation}
    For a large enough timescale distinction, a one-dimensional model of thermohaline circulation, (2.4) 
    may be restated as
    \begin{equation}
        \frac{d\Delta S}{dt'} =P - \Delta S(1+\beta(\theta -\lambda \Delta S)^2 ) \label{eqn: theorem unnormalized salinity dyanmics}
    \end{equation}
    with the parameters $\lambda = \frac{\alpha_S}{\alpha_T}$, $\beta = \frac{qt_d \alpha_T^2}{V}$, and $P = \frac{S_0t_d}{H}\bar F$, where $\bar F$ is an annual average freshwater flux, defined by Cessi.
\end{lemma}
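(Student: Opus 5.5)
The plan is to undo Cessi's nondimensionalization, carrying $\theta$ along as a free parameter, and then perform the same fast--slow reduction Cessi used.

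\textbf{Step 1: redimensionalize.} Starting from the nondimensional system \eqref{nondimensional Cessi model}, I will substitute $x=\Delta T/\theta$ and $y=\alpha_S\Delta S/(\alpha_T\theta)$, using the relations \eqref{eqn: cessi change of variables} to convert the time derivatives.

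In the $x$-equation, multiplying by $\theta$ turns $-\alpha(x-1)$ into $-\alpha(\Delta T-\theta)$ and $-x$ into $-\Delta T$. For the quadratic exchange term I will compute
\[
\mu^2(x-y)^2=\frac{qt_d(\alpha_T\theta)^2}{M}\cdot\frac{(\Delta T-\lambda\Delta S)^2}{\theta^2}=\beta(\Delta T-\lambda\Delta S)^2 ,
\]
where $\beta=qt_d\alpha_T^2/M$ and $\lambda=\alpha_S/\alpha_T$. The paper writes $V$ for the box volume $M$. The factor $\theta^2$ cancels, and this cancellation is the point of the computation: it frees $\theta$ from the coefficients so it can appear as a parameter.

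In the $y$-equation, multiplying by $\alpha_T\theta/\alpha_S$ turns $p$ into $\frac{\alpha_T\theta}{\alpha_S}\cdot\frac{\alpha_S S_0 t_d}{\alpha_T\theta H}\bar F=\frac{S_0t_d}{H}\bar F=P$. The remaining terms give $-\Delta S(1+\beta(\Delta T-\lambda\Delta S)^2)$. This reproduces \eqref{eqn:temperature difference}--\eqref{eqn:salinity difference}.

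\textbf{Step 2: reduce to the slow dynamics.} I will divide the $\Delta T$ equation by $\alpha$ and write $\varepsilon=1/\alpha\sim\mathcal O(10^{-3})$. This gives the singularly perturbed form
\[
\varepsilon\,\Delta T'=-(\Delta T-\theta)-\varepsilon\,\Delta T\bigl(1+\beta(\Delta T-\lambda\Delta S)^2\bigr).
\]
At $\varepsilon=0$ the critical manifold is $\Delta T=\theta$. It is normally hyperbolic and attracting, since the fast linearization is $-1<0$ uniformly in $\Delta S$.

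By Fenichel's theorem, for $\varepsilon$ small enough (the ``large enough timescale distinction'' in the statement) there is an invariant slow manifold $\Delta T=\theta+\mathcal O(\varepsilon)$ on compact sets of $\Delta S$. Substituting its leading order $\Delta T=\theta$ into \eqref{eqn:salinity difference} yields \eqref{eqn: theorem unnormalized salinity dyanmics}, with $P$ constant equal to the annual mean forcing.

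\textbf{Consistency check.} Setting $y=\lambda\Delta S/\theta$ in \eqref{eqn: theorem unnormalized salinity dyanmics} should recover \eqref{eqn: Cessi orig model}. This confirms the statement is a genuine restatement of Cessi's model.

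\textbf{Main obstacle.} The hardest part is rigor in Step 2. Fenichel theory requires compactness, so the uniformity must be stated on a bounded range of $\Delta S$ containing the equilibria. The reduced equation is also only the leading-order flow on the slow manifold. I would make this explicit: the restatement is exact at $\varepsilon=0$, and for small $\varepsilon$ its hyperbolic equilibria persist with $\mathcal O(\varepsilon)$ shifts.
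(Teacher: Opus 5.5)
Your proposal is correct and follows the paper's proof: you undo Cessi's scaling, which gives $\mu^2(x-y)^2=\beta(\Delta T-\lambda\Delta S)^2$ and turns $p$ into $P$, and then restrict to the critical manifold $\Delta T=\theta$. Your Fenichel argument (fast eigenvalue $-1$, compact $\Delta S$-ranges) makes rigorous the step the paper only asserts as $\Delta T\approx\theta$. Your consistency check with $y=\lambda\Delta S/\theta$ is, on its own, already a complete direct verification that the equation is a restatement of Cessi's (2.4).
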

\vspace{.5cm}

\begin{proof}
Begin with nondimensional system of equations of stommel's box model from Cessi's (2.3), 

\begin{align}
    \frac{dx}{dt'} &= -\alpha( x-1) - x(1+\mu^2(x-y)^2 ) \label{eqn:cessi temp}\\ 
    \frac{dy}{dt'} &= p - y (1+\mu^2(x-y)^2) \label{eqn:cessi salinity}
\end{align}

where $\alpha, \mu^2,$ and $p(t)$ are all nondimensional parameters described by 
\[\alpha = \frac{t_d}{t_r} \ \ \ \ \mu^2 = \frac{  qt_d (\alpha_T\theta)^2}{V}, \ \ \ \ p (t) = \frac{\alpha_S S_0 t_d }{\alpha_T\theta H }F(t).\]

This formulation is obtained by taking the following change of variables from Stommel's original model (add reference here)
\begin{align}
    x=\frac{\Delta T}{\theta} \ \ \ y=\frac{\alpha_S\Delta S}{\alpha_T \theta} \ \ \ t= t_d t' \label{eqn: cessi change of variables}
\end{align}

We pull back to the unnormalized temperature difference and salinity difference using the the relations \ref{eqn: cessi change of variables}. This gives that 
\[\frac{d \Delta T}{dt'} = \theta \frac{dx}{dt'} \hspace{1cm} \frac{d\Delta S}{dt'} = \frac{\alpha_T \theta}{\alpha_S}\frac{dy}{dt'}. \]

Explicitly, we have a system of two nonlinear equations in dimensional form with $\frac{d \Delta T}{dt'}$ describing temperature difference change in celcius and $\frac{d \Delta S}{dt'} $ the salinity difference change in practical salinity unit (psu). This model is described with the following equations.

\begin{align}
     \frac{d\Delta T}{dt'} &= -\alpha(\Delta T - \theta) - \Delta T(1+\beta(\Delta T- \lambda \Delta S)^2 ) \label{eqn:temperature difference}\\
     \frac{d\Delta S}{dt'} &=P - \Delta S(1+\beta(\Delta T -\lambda \Delta S)^2 )\label{eqn:salinity difference}
\end{align}

With the parameters $\lambda = \frac{\alpha_S}{\alpha_T}$, $\beta = \frac{qt_d \alpha_T^2}{V}$, and $P = \frac{S_0t_d}{H}F$. Notably, we've recovered the temperature difference equilibrium $\theta$, which represents the difference of ambient temperatures surrounding the northern and southern boxes to which temperatures relax. Moreover, the fast-slow properties of the dynamics are preserved since $\alpha$ is left unchanged.

The system may still be reduced to a one dimensional equation, since $\alpha$ is very large. Because the temperature difference $\Delta T$ evolves at a faster timescale relative to salinity difference $\Delta S$, we can further simplify the analysis of the system by projecting the salinity dynamics onto the critical manifold of the temperature difference, where the fast variable is at quasi-equilibrium $\Delta T \approx \theta$. The one dimensional system is as follows.
\begin{align}
    \frac{d\Delta S}{dt'} &=P - \Delta S(1+\beta(\theta -\lambda \Delta S)^2 )
\end{align}

Thus we have a one dimensional model of the salinity dynamics where the temperature difference equilibrium $\theta$ is a parameter. In this view, the parameter $\lambda$ describes the ratio of haline contration to thermal expansion. The density difference term $(\Delta T - \lambda\Delta S)^2$, now described in temperature units, still drives the dimensionless exchange $(1 + \beta(\Delta T - \lambda\Delta S)^2 )$. 
\end{proof}

\vspace{1cm}

\section{Temperature Induced Saddle-Node Bifurcation}\label{sec:temperaturesaddle}\begin{theorem}\label{thm: temperaturesaddle}The equilibrium of the reformulated model \cref{eqn: theorem unnormalized salinity dyanmics} undergoes saddle-node bifurcations with respect to the temperature parameter $\theta$, creating a region of drop from bistability.\end{theorem}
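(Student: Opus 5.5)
We treat the right-hand side of \eqref{eqn: theorem unnormalized salinity dyanmics} as a one-parameter family $f(\Delta S;\theta)=P-\Delta S\bigl(1+\beta(\theta-\lambda\Delta S)^2\bigr)$ with $P,\beta,\lambda>0$ fixed. The goal is to exhibit values $\theta_1<\theta_2$ at which the standard saddle-node conditions hold. Between these values there are three equilibria; outside them there is one. The hysteresis then follows because the outer two equilibria are stable and the middle one is unstable.

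\textbf{Step 1 (reduce to the depressed cubic).} Using the shift $s=\Delta S+\frac{2\theta}{3\lambda}$ of \cref{lemma: tschirnhaus transformation}, we pass to the form of \cref{cor: shifted salinity dynamics}:
\[
\dot s=g(s;\theta):=h(\theta,P)+r(\theta)s-\beta\lambda^2 s^3 .
\]
Since $s$ is a translation of $\Delta S$ depending smoothly on $\theta$, equilibria and their stabilities correspond one-to-one. (Before using the corollary I would recheck the coefficients $r,h$ by direct expansion.)

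\textbf{Step 2 (locate the fold set).} A saddle node requires $g=0$ and $g_s=r-3\beta\lambda^2 s^2=0$. This forces $r(\theta)>0$ and $s_\pm=\pm\sqrt{r/(3\beta\lambda^2)}$. Substituting back, the fold condition becomes exactly the vanishing of the discriminant \eqref{eqn: cubic discriminant}:
\[
\Phi(\theta):=4r(\theta)^3-27\beta\lambda^2 h(\theta,P)^2=0 .
\]
For $r\le 0$ there is one equilibrium. For $r>0$ there are three equilibria iff $\Phi>0$. Thus I must show that $\Phi(\cdot)$ has two simple zeros $\theta_1<\theta_2$ in $\{r>0\}$, with $\Phi>0$ between them and $\Phi<0$ just outside. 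For this I use the intermediate value theorem.
\begin{itemize}
\item At $\theta_0=\sqrt{3/\beta}$ we have $r=0$, so $\Phi(\theta_0)\le 0$.
\item At Cessi's value $\theta=20$, with the paper's parameters, $\Phi(20)>0$. This is the bistable landscape of Figure~\ref{fig: 1D_Landscape_thetavary}, and it can be checked by evaluating the sign numerically.
\item For large $\theta$, $h\sim -\frac{2\beta\theta^3}{27\lambda}$, so $27\beta\lambda^2h^2\sim \frac{4\beta^3\theta^6}{27}$, while $4r^3\sim \frac{4\beta^3\theta^6}{27}$. The leading terms cancel, so the sign of $\Phi$ at large $\theta$ is decided by lower-order terms. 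I would expand to the next order: the cross term from $h^2$ contributes $-27\beta\lambda^2\cdot 2\cdot\frac{2\beta\theta^3}{27\lambda}\cdot\frac{2\theta}{3\lambda}\sim -\frac{8\beta^2\theta^4}{3}$, while $4r^3$ contributes $-\frac{4\beta^2\theta^4}{3}$. Hence $\Phi\to-\infty$.
\end{itemize}
This yields zeros in $(\theta_0,20)$ and $(20,\infty)$, consistent with the quoted interval $18.6<\theta<22.8$.

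\textbf{Step 3 (nondegeneracy).} At each zero $\theta_i$ I verify the saddle-node transversality conditions:
\begin{itemize}
\item $g_{ss}=-6\beta\lambda^2 s_\pm\neq 0$, which holds since $r(\theta_i)>0$;
\item $\frac{d}{d\theta}g(s;\theta)\big|_{s=s_\pm}=h_\theta+r_\theta s_\pm\neq0$.
\end{itemize}
The second condition is equivalent to $\Phi'(\theta_i)\ne0$, i.e.\ to the zeros being simple. This is the main obstacle, since $\Phi$ is a degree-six polynomial in $\theta$ with parameter-dependent coefficients. My first attempt is to argue that $\Phi$ has exactly two positive roots in $\{r>0\}$ (via Descartes' rule after the cancellation above), so each sign change is simple. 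Failing that, I would verify $\Phi'(\theta_i)\neq0$ numerically at Cessi's parameter values.

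\textbf{Step 4 (conclude hysteresis).} On $(\theta_1,\theta_2)$ the cubic $g$, being negative-leading, has three roots: the outer two satisfy $g_s<0$ and are stable, and the middle one satisfies $g_s>0$ and is unstable. At $\theta_1$ and $\theta_2$ opposite outer branches annihilate with the middle branch, since $s_+$ and $s_-$ are distinct fold points. Sweeping $\theta$ up and down past $\theta_{1,2}$ therefore produces jumps between different branches, which is a hysteresis loop.
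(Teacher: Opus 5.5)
Your route differs from the paper's. The paper only solves $\partial_S f=0$ as a quadratic in $\theta$, obtaining $\theta_c=2\lambda S^*\mp\sqrt{(\lambda S^*)^2-1/\beta}$. That parametrizes the fold locus by the equilibrium value but never fixes $P$ or checks nondegeneracy. You instead fix $P$ and track the sign of the discriminant in $\theta$, which is the right target. As written, however, three steps do not go through.

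First, existence rests on ``$\Phi(20)>0$ with the paper's parameters''. The paper never gives numerical values of $\beta,\lambda$, and the numerics hide a hypothesis without which the statement is false. If $P\le P_{\mathrm{cusp}}=\frac{8}{3\lambda\sqrt{3\beta}}$ there is no saddle-node in $\theta$ at all; at $P=P_{\mathrm{cusp}}$ there is only the degenerate cusp point. Assume $P>P_{\mathrm{cusp}}$ instead. Then $h$ is strictly decreasing in $\theta$ with $h(\theta_0)=P-P_{\mathrm{cusp}}>0$, so its zero $\theta^*$ exceeds $\theta_0$ and $\Phi(\theta^*)=4r(\theta^*)^3>0$.

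Second, the Descartes step fails. After your cancellation,
\[\Phi(\theta)=-4\beta^2\theta^4+4\beta^2\lambda P\theta^3-8\beta\theta^2+36\beta\lambda P\theta-(4+27\beta\lambda^2P^2).\]
This is a quartic, not a sextic, with sign pattern $-,+,-,+,-$, so Descartes permits up to four positive roots.

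The missing idea is that transversality is automatic and needs no numerics. At a fold the $\theta$-dependence of the shift drops out ($g_\theta=f_\theta$ since $f_{\Delta S}=0$), and $f_\theta=-2\beta\,\Delta S\,(\theta-\lambda\Delta S)$. Here $\Delta S>0$ because $P>0$. The condition $f_{\Delta S}=0$ reads $2\beta\lambda\Delta S(\theta-\lambda\Delta S)=1+\beta(\theta-\lambda\Delta S)^2>0$, so $\theta-\lambda\Delta S>0$ and $f_\theta<0$. By your own (correct) equivalence, every zero of $\Phi$ with $r>0$ is therefore simple.

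To get exactly two zeros, note that for $\theta>\theta_0$ the set $\{\Phi=0\}$ is the pair of graphs $P_\pm(\theta)=\frac{2\theta}{3\lambda}+\frac{2\beta\theta^3}{27\lambda}\pm\frac{2r^{3/2}}{3\lambda\sqrt{3\beta}}$. Both start at $P_{\mathrm{cusp}}$ and tend to $\infty$. Differentiating $f=0$ along the fold and using $f_{\Delta S}=0$ gives $P_\pm'=-f_\theta>0$. Hence for $P>P_{\mathrm{cusp}}$ there is exactly one fold $\theta_1$ on $P_+$ and one fold $\theta_2>\theta_1$ on $P_-$, and $\Phi>0$ exactly on $(\theta_1,\theta_2)$.

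This also repairs Step 4. There, ``since $s_+$ and $s_-$ are distinct fold points'' proves nothing: if both folds occurred on the same outer branch you would get an isola, not a hysteresis loop. The double root is $s=-3h/(2r)$, with $h>0$ on $P_+$ and $h<0$ on $P_-$. So the lower branch folds at $\theta_1$ and the upper branch at $\theta_2$.

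Finally, the corollary's $r,h$ correspond to $s=\Delta S-\frac{2\theta}{3\lambda}$, not the $+$ sign quoted in the lemma.
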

\vspace{.5cm}
\begin{proof}
Let $S \equiv \Delta S$ and let $S^*$ be an equilibrium satisfying $f(S^*, \theta, P) = 0$, where the dynamics are given by:$$ f(S, \theta, P) = P - S \left[1 + \beta (\theta - \lambda S)^2 \right]. $$ 
A saddle-node bifurcation occurs when the linear stability of the equilibrium degenerates, i.e., $\frac{\partial f}{\partial S} \big|_{(S^*, \theta_c)} = 0$. Computing the partial derivative:
\begin{align*}
\frac{\partial f}{\partial S} &= -1 - \beta (\theta - \lambda S)^2 + 2\beta  \lambda S (\theta - \lambda S) \\[9pt]
&= - \beta \theta^2 + 4\beta \lambda S \theta - (1+ 3\beta \lambda^2 S^2).\end{align*}

Setting this to zero yields a quadratic equation in $\theta_c$:

\begin{equation} \label{eqn:critical_condition}
- \beta \theta^2 + 4\beta \lambda S \theta - (1+ 3\beta \lambda^2 S^2) = 0.
\end{equation}

Solving for $\theta_c$, we obtain:$$ \theta_c = \frac{-4\beta \lambda S^* \pm \sqrt{16\beta^2 \lambda^2 S^{*2} - 4\beta(3\beta \lambda^2 S^{*2} + 1)}}{-2\beta}. $$Simplifying the discriminant:
\begin{equation}
\theta_c = 2\lambda S^{*} \mp \sqrt{(\lambda S^*)^2 - \frac{1}{\beta}}.
\end{equation}
The existence of two real roots for $\theta_c$ (provided $\lambda^2 S^{*2} > 1/\beta$) indicates two turning points in the bifurcation diagram, which bound the hysteresis loop.
\end{proof}
\vspace{1cm}

\vspace{1cm}
\section{Tschirnhaus Transformation}
\begin{lemma}
    The Tschirnhaus transformation of equation \cref{eqn: theorem unnormalized salinity dyanmics} is \begin{equation*}
            s=\Delta S+\frac{2\theta}{3\lambda}.
        \end{equation*}
\end{lemma}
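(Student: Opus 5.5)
The plan is to expand the right-hand side of \eqref{eqn: theorem unnormalized salinity dyanmics} as a cubic in $\Delta S$, read off the translation that removes its quadratic term, and then check that translation against \cref{cor: shifted salinity dynamics}. Writing $S=\Delta S$ and $f(S)=P-S\left(1+\beta(\theta-\lambda S)^2\right)$, direct expansion gives
\begin{equation*}
f(S) = -\beta\lambda^2 S^3 + 2\beta\lambda\theta\, S^2 - (1+\beta\theta^2)\, S + P .
\end{equation*}
For a cubic $aS^3+bS^2+cS+d$ the Tschirnhaus translation is centered at the inflection point $S=-b/(3a)$. Here $a=-\beta\lambda^2$ and $b=2\beta\lambda\theta$, so the inflection point is $S=2\theta/(3\lambda)$. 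This point also has a direct reading: it is where $f''(S)=0$. The amount of the shift is therefore forced to be $2\theta/(3\lambda)$; what remains is to fix the direction of the translation in the lemma's formula.

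To fix the direction, I will substitute into $f$ in the form consistent with \cref{cor: shifted salinity dynamics}, namely replacing $\Delta S$ by $s+\tfrac{2\theta}{3\lambda}$ and collecting powers of $s$. Set $c_0=2\theta/(3\lambda)$.
\begin{itemize}
\item \textbf{Quadratic term.} It is $(b+3ac_0)s^2=(2\beta\lambda\theta-2\beta\lambda\theta)s^2=0$, so the cubic is depressed.
\item \textbf{Linear term.} It is $-1-\beta\theta^2+4\beta\lambda\theta c_0-3\beta\lambda^2c_0^2=\tfrac{\beta\theta^2}{3}-1=r(\theta)$.
\item \textbf{Constant term.} It is $P-c_0-\beta\theta^2c_0+2\beta\lambda\theta c_0^2-\beta\lambda^2c_0^3$. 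Putting all terms over $27\lambda$ gives $P-\tfrac{2\theta}{3\lambda}-\tfrac{2\beta\theta^3}{27\lambda}=h(\theta,P)$.
\end{itemize}
These are exactly the coefficients stated in \cref{cor: shifted salinity dynamics}. This confirms that the shift by $\tfrac{2\theta}{3\lambda}$ stated in the lemma is the depressing translation the paper uses.

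The main obstacle is the sign convention, not the algebra. The identity that actually carries the computation is $\Delta S=s+\tfrac{2\theta}{3\lambda}$. Solved for $s$, this reads $s=\Delta S-\tfrac{2\theta}{3\lambda}$, which is the opposite sign to the formula as literally written in the lemma. Substituting the literal $s=\Delta S+\tfrac{2\theta}{3\lambda}$ instead would leave a quadratic term of $4\beta\lambda\theta\,s^2$ in the $s$-dynamics. So the lemma has to be read as specifying the translation $\pm\tfrac{2\theta}{3\lambda}$ through the substitution $\Delta S\mapsto s+\tfrac{2\theta}{3\lambda}$, and I would state explicitly in the proof that this is the convention adopted. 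Last, since the map is a translation with unit Jacobian, I will note that $\dot s=\tfrac{d\Delta S}{dt'}$. Equilibria and their stability, i.e. the sign of $f'$, therefore correspond one-to-one, which justifies the remark after the corollary that the dynamics are preserved.
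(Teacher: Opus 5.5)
Your proposal is correct and is essentially the paper's argument. The paper also substitutes $\Delta S = s + k$, sets the $s^2$ coefficient $-3\beta\lambda^2 k + 2\beta\lambda\theta$ to zero, and concludes $\Delta S = s + \frac{2\theta}{3\lambda}$; your inflection-point formula $-b/(3a)$ is the same condition, and your coefficient check is the content of the subsequent corollary. Your sign observation is also right: the paper's own proof yields $s = \Delta S - \frac{2\theta}{3\lambda}$, so the formula printed in the lemma is a sign slip, and the convention you state explicitly is the one the proof actually establishes.
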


\vspace{.5cm}
\begin{proof}
    Let $\Delta S = (s + k)$, and observe that \cref{eqn: theorem unnormalized salinity dyanmics} becomes 

\begin{align}
    \dot s &= P(t) - (s + k)\Bigl(1+\beta\bigl(\theta -\lambda (s + k) \bigr)^2 \Bigr) \notag \\[9pt]
           &= P(t) - (1+\beta \theta^2)(s+k) + 2\beta \lambda \theta (s+k)^2 - \beta \lambda ^2(s+k)^3 \notag \\[9pt]
           &= P(t) - (1+\beta \theta^2)(s+k) + (2 \beta \lambda \theta k^2 + 4 \beta \lambda \theta k s + 2 \beta \lambda \theta s^2) \notag \\
           &\quad - (\beta \lambda^2 k^3 + 3 \beta \lambda^2 k^2 s + 3 \beta \lambda^2 k s^2 + \beta \lambda^2 s^3) \notag
\end{align}
A depressed cubic form requires that $s^2$ term should be suppressed. We will collect terms of $s^2$ and solve for $k$ which enforces a zero coefficient. That is to say we require that 

\[-3\beta \lambda^2 k + 2\beta \lambda \theta = 0 \Rightarrow k = \frac{2\theta}{3\lambda} \]

Therefore, $\Delta S = s + \frac{2\theta}{3\lambda}$ is the Tschirnhaus transformation which depresses the cubic. 
\end{proof}

\vspace{1cm}
\section{Depressed Dynamics}
\begin{corollary}
    The salinity difference dynamics \eqref{eqn: theorem unnormalized salinity dyanmics} expressed in the shifted variable $s$ is 
        \begin{equation}\label{eqn: depressed cubic salinity dynamics}
            \dot s = h(\theta, P) + r(\theta) s  - \beta \lambda^2s^3 ,
        \end{equation}
        where $r(\theta) = (\frac{\beta\theta^2}{3}-1)  $ and $h(\theta, P) = P - \frac{2\theta}{3 \lambda}-\frac{2 \beta \theta^3}{27 \lambda}$.
\end{corollary}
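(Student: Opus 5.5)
The plan is to prove this by direct substitution, building on the expansion already carried out in the proof of \cref{lemma: tschirnhaus transformation}. Write $\Delta S = s + k$ with $k = \frac{2\theta}{3\lambda}$. Since $\theta$ and $\lambda$ are fixed parameters, $k$ does not depend on time, so $\dot s = \frac{d\Delta S}{dt'}$. The right-hand side of \eqref{eqn: theorem unnormalized salinity dyanmics} therefore becomes the polynomial in $s$ obtained in that proof, namely
\[
P - (1+\beta\theta^2)(s+k) + 2\beta\lambda\theta(s+k)^2 - \beta\lambda^2(s+k)^3 .
\]
I will then collect the coefficient of each power of $s$ and simplify each one with the specific value of $k$.

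For the $s^3$ term only the last summand contributes, giving $-\beta\lambda^2$. For the $s^2$ term the coefficient is $2\beta\lambda\theta - 3\beta\lambda^2 k$, which vanishes by the choice of $k$ in \cref{lemma: tschirnhaus transformation}. For the linear term the coefficient is
\[
-(1+\beta\theta^2) + 4\beta\lambda\theta k - 3\beta\lambda^2k^2 .
\]
Substituting $k$ gives $-1 - \beta\theta^2 + \frac{8}{3}\beta\theta^2 - \frac{4}{3}\beta\theta^2 = \frac{\beta\theta^2}{3} - 1$, which is $r(\theta)$.

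The constant term is $P - (1+\beta\theta^2)k + 2\beta\lambda\theta k^2 - \beta\lambda^2 k^3$, and this is the only step that needs care. The non-$\beta$ part gives $-\frac{2\theta}{3\lambda}$. The $\beta\theta^3/\lambda$ parts give
\[
-\tfrac{2}{3} + \tfrac{8}{9} - \tfrac{8}{27} = \tfrac{-18+24-8}{27} = -\tfrac{2}{27}.
\]
This produces $h(\theta,P) = P - \frac{2\theta}{3\lambda} - \frac{2\beta\theta^3}{27\lambda}$.

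The main (mild) obstacle is this bookkeeping of the constant term, where three competing contributions must be combined over the common denominator $27$. I will double-check it by confirming that $h$ reduces to $P - k$ when $\beta = 0$, which is the value of the linear model's right-hand side at $s=0$. Finally, I will note that the transformation is a pure translation, so equilibria and their stability correspond one-to-one between \eqref{eqn: theorem unnormalized salinity dyanmics} and \eqref{eqn: depressed cubic salinity dynamics}. This justifies calling the result the same dynamics.
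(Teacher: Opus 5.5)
Your proposal is correct and takes the same route as the paper: substitute $\Delta S = s + \frac{2\theta}{3\lambda}$ into the reduced salinity equation and collect powers of $s$. The paper states the resulting coefficients without intermediate work, while you verify each one explicitly, and your arithmetic checks out (the linear coefficient reduces to $\frac{\beta\theta^2}{3}-1$, and in the constant term $-\tfrac{2}{3}+\tfrac{8}{9}-\tfrac{8}{27}=-\tfrac{2}{27}$).
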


\vspace{.5cm}
\begin{proof}
    Let $\Delta S = s + \frac{2\theta}{3\lambda}$. Then, \cref{eqn: theorem unnormalized salinity dyanmics} becomes

    \begin{align}
    \dot s &= P(t) - (s + \frac{2\theta}{3\lambda})(1+\beta(\theta -\lambda (s + \frac{2\theta}{3\lambda} ) )^2 ) \notag \\[9pt]
    \dot s &= P - \frac{2\theta}{3 \lambda}-\frac{2 \beta \theta^3}{27 \lambda} + s(\frac{\beta\theta^2}{3}-1) - \beta \lambda^2s^3 \notag \\[9pt]
    \dot s &= P - \frac{2\theta}{3 \lambda}-\frac{2 \beta \theta^3}{27 \lambda} + s [ (\frac{\beta\theta^2}{3}-1) - \beta \lambda^2s^2] \label{eqn: depressed cubic salinity dynamics}
\end{align}
\end{proof}

\vspace{1cm}
\section{Discriminant of the Salinity Dyanmics}
    \begin{lemma}
        The solution set of $(\theta, P)$ satisfying the discriminant condition    
        \begin{align}
        \Delta (\theta, P) = 4 (r(\theta)/\beta\lambda^2)^3 - 27 (h(\theta, P)/\beta\lambda^2)^2
        \end{align} is nonempty. For this nontrivial set, the reduced salinity difference dynamics \eqref{eqn: depressed cubic salinity dynamics} varies smoothly from negative to positive.
    \end{lemma}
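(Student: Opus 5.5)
The statement has two parts. First, the zero set of the discriminant is nonempty. Second, $\Delta$ is smooth and takes both signs across that set, which is how I read "varies smoothly from negative to positive". The plan is to make both explicit through the closed forms of $r(\theta)$ and $h(\theta,P)$ from \cref{cor: shifted salinity dynamics}.

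\textbf{Smoothness.} Since $r(\theta)=\frac{\beta\theta^2}{3}-1$ and $h(\theta,P)=P-\frac{2\theta}{3\lambda}-\frac{2\beta\theta^3}{27\lambda}$ are polynomials in $(\theta,P)$, and $\beta\lambda^2>0$ is a fixed constant, $\Delta(\theta,P)$ is a polynomial and hence $C^\infty$ on $\mathbb{R}^2$.

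\textbf{Nonemptiness.} I would solve $\Delta=0$ explicitly. The condition $\Delta=0$ is equivalent to
\[
h(\theta,P)^2=\frac{4\,r(\theta)^3}{27\,\beta\lambda^2},
\]
which has real solutions exactly when $r(\theta)\ge 0$, i.e.\ $|\theta|\ge \sqrt{3/\beta}$. For each such $\theta$ I would define
\[
P_\pm(\theta)=\frac{2\theta}{3\lambda}+\frac{2\beta\theta^3}{27\lambda}\pm\frac{2}{3\sqrt{3}}\frac{r(\theta)^{3/2}}{\sqrt{\beta}\,\lambda}.
\]
These two smooth curves (the fold curves) lie in the zero set, so the set is nonempty. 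They meet at $\theta_c=\sqrt{3/\beta}$, where $r=h=0$. This gives an explicit cusp point $(\theta_c,P_c)$ with $P_c=\frac{2\theta_c}{3\lambda}+\frac{2\beta\theta_c^3}{27\lambda}$.

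\textbf{Sign change.} Fix $\theta>\theta_c$, so $r(\theta)>0$. Then $P\mapsto\Delta(\theta,P)$ is a downward parabola in $h$, with $\partial_h\Delta=-54h/(\beta\lambda^2)^2$. This gives $\Delta>0$ for $P_-<P<P_+$ (three real equilibria) and $\Delta<0$ outside that interval. On the curves, $\partial_P\Delta=-54h/(\beta\lambda^2)^2\neq 0$ because $h\neq0$ there when $r>0$. By the implicit function theorem the zero set is therefore a regular curve away from the cusp point, and $\Delta$ crosses it transversally, changing sign from negative to positive. For $\theta<\theta_c$ we have $r<0$, so $\Delta<0$ everywhere on that side. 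This matches the picture in Figure \ref{fig: discriminant}. The same argument applies to the symmetric branch $\theta\le-\sqrt{3/\beta}$, but I would restrict to $\theta>0$ on physical grounds.

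\textbf{Main obstacle.} The delicate point is the cusp point itself, where $\nabla\Delta=0$ and transversality fails. There I would argue directly: in the coordinates $(r,h)$, the map $(\theta,P)\mapsto(r,h)$ is a local diffeomorphism near $\theta_c>0$, since its Jacobian determinant is $\frac{2\beta\theta}{3}\neq0$. In these coordinates $\Delta\propto 4r^3-27\beta\lambda^2h^2$, and its zero set is the standard semicubical cusp. Every neighborhood of the point therefore contains parameters with $\Delta>0$ and with $\Delta<0$, so the sign change persists there, even though the zero set is not a smooth curve at that single point.
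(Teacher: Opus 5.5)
Your proof is correct, and it takes a genuinely different route from the paper. The paper's proof first derives the formula for $\Delta$ from the depressed-cubic discriminant $-4A^3-27B^2$, with $A=-r/(\beta\lambda^2)$ and $B=-h/(\beta\lambda^2)$. It then reads off both claims, that $\Delta=0$ has solutions and that $\Delta$ passes from negative to positive, from the plotted discriminant surface. That argument is graphical and tied to the chosen parameter values.

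You instead take the formula as given and prove both claims analytically:
\begin{itemize}
\item Solving $h^2=4r^3/(27\beta\lambda^2)$ gives explicit fold curves $P_\pm(\theta)$ for $\theta\ge\sqrt{3/\beta}$.
\item The derivative $\partial_P\Delta=-54h/(\beta\lambda^2)^2\neq 0$ off the cusp gives a transversal sign change.
\item The change of coordinates $(\theta,P)\mapsto(r,h)$ reduces the cusp point to the normal form $4r^3-27\beta\lambda^2h^2$.
\end{itemize}

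The paper's route is shorter and makes the link between the sign of $\Delta$ and the number of real equilibria explicit. However, it is numerical evidence rather than a proof.

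Your route has three advantages. It is parameter-independent. It gives the exact threshold $\theta>\sqrt{3/\beta}$, below which bistability is impossible. It also produces the cusp point directly: your $P_c=\frac{8\theta_c}{9\lambda}=\frac{8}{3\lambda\sqrt{3\beta}}$ agrees with the paper's Cusp Theorem, which obtains it separately from the conditions $f=\partial_S f=\partial_S^2 f=0$. In addition, your analysis shows that away from the cusp point the zero set of $\Delta$ consists of saddle-node (fold) curves. So the paper's remark that this set ``points to a pitchfork'' is accurate only at the single point $r=h=0$.

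Two small fixes:
\begin{itemize}
\item Your $P_\pm$ are only $C^1$ at $\theta_c$, because of the $r^{3/2}$ term. Call them smooth on $\theta>\theta_c$ rather than smooth everywhere.
\item Your gloss that $\Delta>0$ means three real equilibria relies on the identification of $A$ and $B$ above. State that identification explicitly, since it is the link between the sign of $\Delta$ and the equilibrium count.
\end{itemize}
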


\vspace{.5cm}
\begin{proof}
    The bifurcation set is the collection of pairs $(\theta, P)$ such that the cubic discriminant of the vectorfield, $\Delta(\theta, P),$ is $0$. We will find the roots of our salinity difference vectorfield, i.e. $h(\theta, P) + r(\theta)s - \beta\lambda^2s^3 = 0$ and inspect where the discriminant transitions from negative, i.e. where there is only one real fixed point, to positive, where there are three fixed points. 
    
    Recall that the discriminant of a depressed cubic polynomial $y^3 + Ay+B=0$ is $\Delta = -4A^3-27B^2$.

    Taking $A = - \frac{r(\theta)}{\beta\lambda^2}$ and $B =- \frac{h(\theta,P)}{\beta\lambda^2}$,  the discriminant is
    \begin{align}
        \Delta (\theta, P) = 4 (r(\theta)/\beta\lambda^2)^3 - 27 (h(\theta, P)/\beta\lambda^2)^2
    \end{align}
    
    \begin{figure}[h!]
        \centering
        \includegraphics[width=0.5\linewidth]{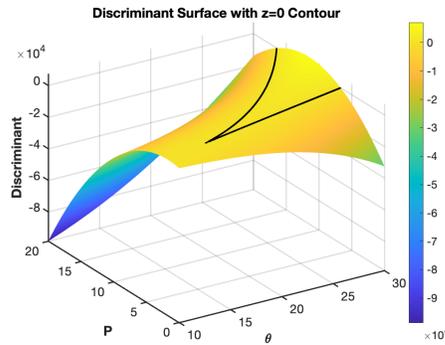}
        \caption{The discriminant surface generated by parameters $P$ and $\theta$. }
        \label{fig: discriminant}
    \end{figure}
    
    The discriminant smoothly varies from negative to positive. We see that for there exist pairs of $(\theta,P)$ such that the discriminant is zero, see figure \ref{fig: discriminant}. This points to the existence of a pitchfork bifurcation.  
\end{proof}

\vspace{1cm}

\section{Cusp Theorem}
\begin{theorem}\label{thm: cusp theorem}
 Let $S \equiv \Delta S$. The system governed by the dynamics $f(S, \theta, P) = P - S[1+\beta(\theta-\lambda S)^2]$ exhibits a cusp bifurcation at the unique parameter point:
\begin{equation}
(\theta_{\text{cusp}}, P_{\text{cusp}}) = \left( \sqrt{\frac{3}{\beta}}, \quad \frac{8}{3\lambda\sqrt{3\beta}} \right).
\end{equation}
This point marks the boundary in parameter space separating the regime of hysteresis from the regime of monotonic stability.. 
\end{theorem}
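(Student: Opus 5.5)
The plan is to work in the depressed coordinates of \cref{cor: shifted salinity dynamics}, where $\dot s = g(s;\theta,P) := h(\theta,P) + r(\theta)s - \beta\lambda^2 s^3$. Since $s = \Delta S + \frac{2\theta}{3\lambda}$ is a parameter-dependent translation (\cref{lemma: tschirnhaus transformation}), equilibria of $f$ and of $g$ correspond one to one, with the same multiplicity and stability. So it suffices to verify the standard cusp conditions for $g$:
\begin{equation*}
g = \partial_s g = \partial_s^2 g = 0, \qquad \partial_s^3 g \neq 0,
\end{equation*}
together with a transversality (unfolding) condition on the parameters.

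First, I would solve the degeneracy conditions directly.
\begin{itemize}
\item $\partial_s^2 g = -6\beta\lambda^2 s = 0$ forces $s=0$, because $\beta,\lambda>0$.
\item Then $\partial_s g = r(\theta) - 3\beta\lambda^2 s^2 = 0$ reduces to $r(\theta)=0$, that is $\theta^2 = 3/\beta$. Restricting to the physically meaningful $\theta>0$ gives $\theta_{\text{cusp}} = \sqrt{3/\beta}$.
\item Then $g=0$ at $s=0$ reduces to $h(\theta,P)=0$, so
\begin{equation*}
P = \frac{2\theta}{3\lambda}\Bigl(1+\frac{\beta\theta^2}{9}\Bigr) = \frac{2\theta}{3\lambda}\cdot\frac{4}{3} = \frac{8\theta}{9\lambda}.
\end{equation*}
Substituting $\theta_{\text{cusp}}$ gives $P_{\text{cusp}} = \frac{8\sqrt3}{9\lambda\sqrt\beta} = \frac{8}{3\lambda\sqrt{3\beta}}$, as claimed.
\end{itemize}
This chain of implications also gives uniqueness: every solution of the three conditions with $\theta>0$ is this point. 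The cubic coefficient satisfies $\partial_s^3 g = -6\beta\lambda^2 \neq 0$, so the singularity is exactly a triple root and not degenerate beyond codimension two.

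Second, I would check the nondegeneracy of the unfolding, namely that $(\theta,P)\mapsto(r,h)$ is a local diffeomorphism at the cusp point. Its Jacobian is triangular, since $\partial_P r = 0$ and $\partial_P h = 1$. The determinant is therefore $r'(\theta) = \frac{2\beta\theta}{3}$, which is nonzero at $\theta_{\text{cusp}}>0$. Near the cusp, the family is then a smooth reparametrization of the canonical form \eqref{eqn: cusp canonical form}, after rescaling $s$ to absorb the positive factor $\beta\lambda^2$. The standard cusp normal-form theorem then applies. This gives the cusp bifurcation, with fold curves given by $\Delta(\theta,P)=0$ from \cref{lemma: discriminant}.

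Finally, for the separating statement I would read off the regimes from the discriminant \eqref{eqn: cubic discriminant}.
\begin{itemize}
\item For $\theta<\theta_{\text{cusp}}$ we have $r<0$. Then $\partial_s g = r - 3\beta\lambda^2 s^2 < 0$ everywhere, so $g$ is strictly decreasing. This gives a unique, globally stable equilibrium for every $P$: no hysteresis, and the equilibrium depends monotonically on $P$.
\item For $\theta>\theta_{\text{cusp}}$, the condition $\Delta>0$ holds on the wedge $27\beta\lambda^2 h^2 < 4r^3$, which contains $h=0$. That wedge is exactly the bistable region bounded by two fold curves, which meet tangentially at $(\theta_{\text{cusp}},P_{\text{cusp}})$.
\end{itemize}

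The main obstacle is not the algebra but making precise which notion of cusp bifurcation is being proved. I would state explicitly the conditions being verified (triple root plus regular parameter map), so that the normal-form theorem can be quoted rather than reproved. I would also note the standing restriction $\theta>0$, which is what excludes the spurious root $-\sqrt{3/\beta}$ and makes the cusp point unique.
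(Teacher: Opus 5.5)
Your proposal is correct, and its computational core is the paper's argument in shifted coordinates. The paper imposes $f=\partial_S f=\partial_S^2 f=0$ directly in $S$, where $\partial_S^2 f=2\beta\lambda(2\theta-3\lambda S)=0$ gives $S=\frac{2\theta}{3\lambda}$ (precisely $s=0$). Then $\partial_S f=0$ gives $S_c=\frac{2}{\lambda\sqrt{3\beta}}$, whence $\theta_c=\frac{3\lambda}{2}S_c$ and $P_c=\frac{4}{3}S_c$, and the paper concludes from $\partial_S^3 f=-6\beta\lambda^2\neq 0$. In the depressed form the same three conditions collapse immediately to $s=0$, $r=0$, $h=0$.

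Where you genuinely go beyond the paper is everything after that. The paper never checks the unfolding condition, so strictly it only exhibits a triple root at one parameter value. Your triangular Jacobian with determinant $r'(\theta)=\frac{2\beta\theta}{3}\neq 0$, together with the rescaling $w=\lambda\sqrt{\beta}\,s$ (which gives exactly $\dot w=\lambda\sqrt{\beta}\,h+rw-w^3$), shows the family is a regular reparametrization of the canonical cusp. That is what the term ``cusp bifurcation'' actually requires. The paper also takes the positive root for $S_c$ without comment, whereas your restriction $\theta>0$ is what actually justifies the claimed uniqueness. Finally, the paper's proof does not address the last sentence of the statement at all. Your dichotomy does: $r<0\Rightarrow\partial_s g<0$ gives a unique, globally attracting equilibrium depending monotonically on $P$, while $r>0$ gives the discriminant wedge containing $h=0$.

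One cosmetic point: the shift consistent with the formulas for $r$ and $h$ that you use is $\Delta S=s+\frac{2\theta}{3\lambda}$, i.e.\ $s=\Delta S-\frac{2\theta}{3\lambda}$; the lemma as printed has the sign reversed. Since you only use that it is a parameter-dependent translation, nothing in your argument is affected.
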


\begin{proof}
    We seek the codimension-2 point where the hysteresis loop collapses. This occurs when the two saddle-node bifurcations derived in \cref{thm: temperaturesaddle} merge into a single point. Analytically, this requires a higher-order degeneracy where not only the linear stability vanishes, but the curvature of the vector field vanishes as well.
Let $S \equiv \Delta S$. The dynamics are given by $f(S, \theta, P) = P - S(1 + \beta (\theta - \lambda S)^2)$. The cusp geometry requires the simultaneous satisfaction of three conditions:
\begin{enumerate}
    \item Equilibrium: $f = 0$
    \item Vanishing Linear Stability: $\frac{\partial f}{\partial S} = 0$
    \item Vanishing Curvature: $\frac{\partial^2 f}{\partial S^2} = 0$
\end{enumerate}

\noindent We first examine the curvature condition. Computing the second derivative of $f$ with respect to $S$:
\[ \frac{\partial^2 f}{\partial S^2} = 2\beta \lambda \bigl( 2(\theta - \lambda S) - \lambda S \bigr). \]
Setting this to zero yields a geometric constraint on the state space:
\begin{equation}\label{eqn:cusp_constraint}
    S = \frac{2\theta}{3\lambda}.
\end{equation}

\noindent Next, we substitute \cref{eqn:cusp_constraint} into the linear stability condition, $\frac{\partial f}{\partial S} = 0$. Using the substitution $u = \theta - \lambda S = \frac{\lambda S}{2}$, the condition simplifies:
\begin{align}
    \frac{\partial f}{\partial S} &= -1 - \beta u^2 + 2\beta \lambda S u \nonumber \\
                                           &= -1 - \beta \left(\frac{\lambda S}{2}\right)^2 + \beta (\lambda S)^2 \nonumber \\
                                           &= -1 + \frac{3}{4}\beta \lambda^2 S^2 = 0.
\end{align}
This fixes the critical salinity difference at the cusp:
\begin{equation}
    S_c = \frac{2}{\lambda \sqrt{3\beta}}.
\end{equation}

\noindent Finally, we recover the critical parameters. Substituting $S_c$ back into \cref{eqn:cusp_constraint} gives the critical temperature forcing:
\[ \theta_c = \frac{3\lambda}{2} S_c = \sqrt{\frac{3}{\beta}}. \]
Substituting both $S_c$ and $\theta_c$ into the equilibrium condition $f=0$ determines the critical freshwater flux $P$:
\[ P_c = S_c \left( 1 + \beta \left( \frac{\lambda S_c}{2} \right)^2 \right) = \frac{4}{3} S_c = \frac{8}{3\lambda\sqrt{3\beta}}. \]

\noindent Since the third derivative $\frac{\partial^3 f}{\partial S^3} = -6\beta \lambda^2 \neq 0$, the non-degeneracy condition is met, confirming this point is a cusp bifurcation.
\end{proof}

%% file: references.bib
@article{Cessi1994,
  author = {Cessi, Paola},
  title = {A Simple Box Model of Stochastically Forced Thermohaline Circulation},
  journal = {Journal of Physical Oceanography},
  volume = {24},
  number = {9},
  pages = {1911--1920},
  year = {1994}
}

@article{Stommel1961,
  author = {Stommel, Henry},
  title = {Thermohaline convection with two stable regimes of flow},
  journal = {Tellus},
  volume = {13},
  number = {2},
  pages = {224--230},
  year = {1961}
}

@book{Strogatz,
isbn = {0367026503},
language = {eng},
publisher = {CRC Press},
title = {Nonlinear dynamics and chaos: with applications to physics, biology, chemistry, and engineering},
year = {2024},
author = {Strogatz, Steven Henry},
address = {Boca Raton},
edition = {Third edition.},
}

@misc{Zeeman1976,
issn = {0036-8733},
journal = {Scientific American},
language = {eng},
number = {4},
pages = {65-83},
title = {Catastrophe Theory},
volume = {234},
year = {1976},
author = {Zeeman, E. C.},
}

@book{ThomRené1989,
isbn = {9780201094190},
language = {eng},
publisher = {Addison-Wesley, Advanced Book Program},
series = {Advanced book classics},
title = {Structural stability and morphogenesis: an outline of a general theory of models},
year = {1989},
author = {Thom, René},
address = {Redwood City, Calif},
}

@misc{NOAA,
  author = {{National Oceanic and Atmospheric Administration}},
  title = {What is the Atlantic Meridional Overturning Circulation?},
  year = {2023},
  howpublished = {\url{https://oceanservice.noaa.gov/facts/amoc.html}},
  note = {Accessed: 2025-05-15}
}

@article{Fedorov,
author = {Sévellec, Florian and Fedorov, Alexey V.},
address = {Boston, MA},
copyright = {2013 American Meteorological Society},
issn = {0894-8755},
journal = {Journal of climate},
language = {eng},
number = {7},
pages = {2160-2183},
publisher = {American Meteorological Society},
title = {The Leading, Interdecadal Eigenmode of the Atlantic Meridional Overturning Circulation in a Realistic Ocean Model},
volume = {26},
year = {2013},
}

@article{Serreze,
author = {Serreze, Mark C. and Barry, Roger G.},
address = {AMSTERDAM},
copyright = {2011},
issn = {0921-8181},
journal = {Global and planetary change},
language = {eng},
number = {1},
pages = {85-96},
publisher = {Elsevier B.V},
title = {Processes and impacts of Arctic amplification: A research synthesis},
volume = {77},
year = {2011},
}

@article{Rahmstorf1996,
author = {Rahmstorf, S.},
address = {NEW YORK},
copyright = {Copyright 2017 Elsevier B.V., All rights reserved.},
issn = {0930-7575},
journal = {Climate dynamics},
language = {eng},
number = {12},
pages = {799-811},
publisher = {Springer Nature},
title = {On the freshwater forcing and transport of the Atlantic thermohaline circulation},
volume = {12},
year = {1996},
}
